\definecolor{grey}{rgb}{0.5,0.5,0.4}
\def\opn#1#2{\def#1{\operatorname{#2}}} 
\opn\chara{char} \opn\length{\ell}
\opn\projdim{proj\,dim} \opn\injdim{inj\,dim} \opn\rank{rank}
\opn\depth{depth} \opn\grade{grade} \opn\height{height}
\opn\embdim{emb\,dim} \opn\codim{codim}
\opn\Tr{Tr} \opn\bigrank{big\,rank}
\opn\superheight{superheight}\opn\lcm{lcm}
\opn\trdeg{tr\,deg}%
\opn\reg{reg} \opn\lreg{lreg}
\opn\Ker{Ker} \opn\Coker{Coker} \opn\Im{Im} \opn\Hom{Hom}
\opn\Tor{Tor} \opn\Ext{Ext} \opn\End{End} \opn\Aut{Aut} \opn\id{id}
\opn\nat{nat}
\opn\pff{pf}
\opn\Pf{Pf} \opn\GL{GL} \opn\SL{SL} \opn\mod{mod} \opn\ord{ord}
\def\Implies{\ifmmode\Longrightarrow \else
     \unskip${}\Longrightarrow{}$\ignorespaces\fi}
\def\implies{\ifmmode\Rightarrow \else
     \unskip${}\Rightarrow{}$\ignorespaces\fi}
\def\iff{\ifmmode\Longleftrightarrow \else
     \unskip${}\Longleftrightarrow{}$\ignorespaces\fi}
\DeclareMathOperator*\lowlim{lim\ inf}
\DeclareMathOperator*\uplim{lim\ sup}
\newtheorem{theorem}{Theorem}[section]
\newtheorem{lemma}[theorem]{Lemma}
\newtheorem{Theorem}{Theorem}[section]
\newtheorem{Corollary}[Theorem]{Corollary}
\newtheorem{Remark}[Theorem]{Remark}
\newtheorem{Example}[Theorem]{Example}
\newtheorem{Definition}[Theorem]{Definition}
\newtheorem{Assumption}[Theorem]{Assumption}
\theoremstyle{definition}
\opn\ini{in} \opn\inm{inm} \opn\Sym{Sym} \opn\diag{diag}
\opn\Ii{(i)} \opn\Iii{(ii)}
\title{ On Lagrange multipliers of constrained optimization in Hilbert spaces}
\author{Zhiyu Tan $^\star$}
\thanks{ Zhiyu Tan, School of Mathematical Sciences and Fujian Provincial Key Laboratory on Mathematical Modeling and High Performance Scientific Computing, Xiamen University, Fujian, 361005, China.
Email: {\tt zhiyutan@xmu.edu.cn} }
\begin{document}
\maketitle

{\bf Abstract:}\hspace*{10pt} {In this paper we introduce the essential Lagrange multiplier and establish the solid mathematical foundation of constrained optimization in Hilbert spaces with sharp results on the mathematical foundation of quadratic-programming based methods such as the SQP method, the  necessary and sufficient conditions for the existence and uniqueness of Lagrange multipliers, the essential difference of the theory of Lagrange multipliers in finite and infinite-dimensional spaces and an essential characterization of the convergence of the classical augmented Lagrangian method. They are achieved by a newly developed decomposition framework for Lagrange multipliers of the Karush-Kuhn-Tucker system of constrained optimization problems in Hilbert spaces, which is totally different from the existing theories based on separation theorems.}

{{\bf Keywords:}\hspace*{10pt}  Constrained optimization,\ KKT system,\ Essential Lagrange multiplier,\ Quadratic-programming based methods,\ Augmented Lagrangian method.}
\vspace{8pt}

\textbf{Mathematics Subject Classification}: 46N10, 49J27, 90C25, 90C46, 90C48.

\section{Introduction}
Constrained optimization is ubiquitous in the real world. The development of optimization mainly benefits from its wide applications in engineering, computer science, economy and many other fields, and modern optimization theory is deeply affected by this. Many exciting results have been achieved in algorithm design and theoretical analysis in the past several decades (cf. \cite{bertsekas1996constrained, boyd2004convex, hinze2008optimization, ito2008lagrange, nocedal2006numerical}). However, there are still some fundamental mathematical issues of constrained optimization that remain unclarified, missing or even untouched. Just a few are listed below.
\begin{enumerate}[$\bullet$]
  \item What is the mathematical foundation of the QP-based methods, such as the SQP method?
  
  \item  What are the sufficient and necessary conditions for the existence and uniqueness of Lagrange multipliers? 
  
  \item  How to mathematically clarify the difference between the existence theory of Lagrange multipliers in finite and infinite-dimensional spaces?
  
  \item How to characterize the convergence of multipliers in the multiplier-based methods (such as the augmented Lagrangian method and ADMM) without assuming the existence of Lagrange multipliers?
      
\end{enumerate} 
Note that in order to answer these questions, a solid mathematical foundation of constrained optimization is necessary.

The KKT (Karush-Kuhn-Tucker) system and the related Lagrange multipliers are of great significance to the theories and algorithms of constrained optimization problems (cf. \cite{bertsekas1996constrained, bertsekas2009convex, hinze2008optimization, ito2008lagrange, karush1939minima, Kuhn2014, kuhn1951nonlinear, luenberger1997optimization, nocedal2006numerical, rockafellar1993lagrange, rockafellar1997convex}). The existing theory of Lagrange multipliers is mainly based on separation theorems (cf. \cite{hinze2008optimization, ito2008lagrange}) and it is far from  complete. For example, the existing theory does not answer the third question above. Note that this question arises naturally from optimal control problems with pointwise state constraints (cf. \cite{brenner2020p, hinze2008optimization, ito2008lagrange}). The requirement of interior point conditions in separation theorems makes it hard or even impossible to answer this question by following the existing approach, because interior point conditions are essential for separation theorems in infinite-dimensional spaces (cf. \cite[Chapter 1, Remark 4]{brezis2011functional}). 

In this paper we attempt to move away from the classical approach based on separation theorems to develop a new framework to investigate Lagrange multipliers of the KKT system of constrained optimization problems, which can also cover some cases of variational inequalities related to constrained optimization problems.  In our approach we first construct a surrogate model that should share the same KKT system with the optimization problem at a given local minimizer by making use of the linearization of the problem, and then discuss the KKT system of the surrogate model at the minimizer. The approach here is based on a key observation that the KKT system only involves the linearized information of the optimization problem at a minimizer and is somehow a reformulation of the first order necessary condition with respect to the linearizing cone at the minimizer.

For the surrogate model,  we will prove the following existence theorem (see Section \ref{sec:general}).
\begin{theorem}\label{thm:sm_e}
For a given minimizer $u^*$ of the constrained optimization problem (\ref{eq:GO}),  the surrogate model exists for all $f\in \mathcal{F}(u^*)$ at the minimizer if and only if Guignard's condition (\ref{eq:GCQ}) holds, where $\mathcal{F}(u^*)$ is the set of all Fr\'{e}chet differentiable objective functions which have a local constrained minimizer at $u^*$. 
\end{theorem}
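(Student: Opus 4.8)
The plan is to recast the equivalence entirely in terms of polar cones at $u^*$. Write $T(u^*)$ for the (contingent) tangent cone of the feasible set at $u^*$ and $L(u^*)$ for the linearizing cone determined by the active constraints, and denote by $K^{\circ}=\{c:\langle c,d\rangle\le 0\ \forall d\in K\}$ the polar of a cone $K$. One always has $T(u^*)\subseteq L(u^*)$, hence $L(u^*)^{\circ}\subseteq T(u^*)^{\circ}$, and Guignard's condition (\ref{eq:GCQ}) is precisely the reverse inclusion $T(u^*)^{\circ}\subseteq L(u^*)^{\circ}$, i.e. $T(u^*)^{\circ}=L(u^*)^{\circ}$. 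The two facts that drive the proof are: (i) by construction the surrogate model shares the KKT system, so it exists for a given $f$ if and only if the KKT condition $-\nabla f(u^*)\in L(u^*)^{\circ}$ holds; and (ii) since $u^*$ is a local constrained minimizer of $f$, the first-order necessary condition forces $-\nabla f(u^*)\in T(u^*)^{\circ}$.

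For sufficiency, assume (\ref{eq:GCQ}), so that $T(u^*)^{\circ}=L(u^*)^{\circ}$. Given any $f\in\mathcal{F}(u^*)$, fact (ii) yields $-\nabla f(u^*)\in T(u^*)^{\circ}=L(u^*)^{\circ}$, which by fact (i) is exactly the existence of the surrogate model; the multipliers are then read off from the representation of $L(u^*)^{\circ}$ as the closed cone generated by the active constraint functionals. Hence the surrogate exists for every $f\in\mathcal{F}(u^*)$.

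For necessity the central ingredient is a realizability lemma:
\[
\{-\nabla f(u^*):f\in\mathcal{F}(u^*)\}=T(u^*)^{\circ}.
\]
The inclusion $\subseteq$ is again the first-order necessary condition (ii). For $\supseteq$, given $c\in T(u^*)^{\circ}$ one must produce a Fr\'echet-differentiable $f$ attaining a local constrained minimum at $u^*$ with $-\nabla f(u^*)=c$; equivalently, $c$ must lie in the Fr\'echet normal cone, and the smooth variational description of Fr\'echet normals supplies such an $f$. Granting the lemma, if the surrogate exists for every $f\in\mathcal{F}(u^*)$ then by fact (i) every $c=-\nabla f(u^*)$ lies in $L(u^*)^{\circ}$; as $c$ ranges over all of $T(u^*)^{\circ}$ by the lemma, this gives $T(u^*)^{\circ}\subseteq L(u^*)^{\circ}$, and together with the automatic reverse inclusion we obtain (\ref{eq:GCQ}).

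The main obstacle is the $\supseteq$ direction of the realizability lemma in the infinite-dimensional setting. The naive choice $f(u)=\langle -c,u-u^*\rangle+\tfrac12\|u-u^*\|^2$ need not place a local minimum at $u^*$, because the $o(\|u-u^*\|)$ term coming from $c\in T(u^*)^{\circ}$ can dominate the quadratic penalty along feasible sequences. The remedy is to replace the quadratic by a smooth modulus $\gamma(\|u-u^*\|)$ with $\gamma(0)=\gamma'(0)=0$ that majorizes this $o(\cdot)$ term; such a $\gamma$ exists because the feasible set is closed and a Hilbert space is Asplund with a smooth renorming, which is exactly what makes the smooth variational description of the Fr\'echet normal cone available. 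Verifying that the resulting $f$ is genuinely Fr\'echet differentiable with $-\nabla f(u^*)=c$ and a local constrained minimizer at $u^*$ is the technical crux of the argument.
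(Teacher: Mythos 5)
Your overall architecture --- fact (i): the surrogate exists for a given $f$ iff $-D_uf(u^*)\in L^{\circ}(\mathcal{K},u^*)$; fact (ii): the first-order necessary condition; plus a realizability lemma identifying $\{-D_uf(u^*):\ f\in\mathcal{F}(u^*)\}$ with a polar cone --- is exactly the paper's proof (the paper cites Theorem 3.2 of Gould--Tolle for the realizability lemma and their Lemma 4.2 for the automatic inclusion (\ref{eq:L_sub_T})). However, you run the whole argument with the contingent (strong) tangent cone $T(M,u^*)$, whereas the paper's Guignard condition (\ref{eq:GCQ}), its first-order condition (\ref{eq:GO_FO}) and the Gould--Tolle realizability theorem all live on the \emph{weak sequential} tangent cone $T_w(M,u^*)$. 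In the infinite-dimensional Hilbert setting this is not cosmetic: it is precisely where your necessity direction breaks. The realizability lemma as you state it, $\{-D_uf(u^*):f\in\mathcal{F}(u^*)\}=T^{\circ}(M,u^*)$, is false in general. Your cited tool, the smooth variational description of Fr\'echet normals, shows that this set equals the Fr\'echet normal cone $\hat N(u^*;M)$, and in a reflexive space $\hat N(u^*;M)=T_w^{\circ}(M,u^*)$, which may be \emph{strictly smaller} than $T^{\circ}(M,u^*)$; your phrase ``equivalently, $c$ must lie in the Fr\'echet normal cone'' silently invokes the finite-dimensional identity $\hat N = T^{\circ}$. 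For a concrete failure of that identity in $\ell^2$ with orthonormal basis $\{e_n\}$: for the closed set $M=\{0\}\cup\{n^{-1}(e_1+e_n):\ n\in\mathbb{N}\}$ one checks $T(M,0)=\{0\}$, so $T^{\circ}(M,0)=\ell^2$, yet $\langle e_1, n^{-1}(e_1+e_n)\rangle = \|n^{-1}(e_1+e_n)\|/\sqrt{2}$, so $e_1$ is not a Fr\'echet normal at $0$ and no $f\in\mathcal{F}(0)$ has $-D_uf(0)=e_1$ (any such $f$ would satisfy $f(n^{-1}(e_1+e_n))-f(0)=-n^{-1}+o(n^{-1})<0$). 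Consequently, from ``the surrogate exists for every $f\in\mathcal{F}(u^*)$'' your argument can only deliver $T_w^{\circ}(M,u^*)\subseteq L^{\circ}(\mathcal{K},u^*)$ --- which, combined with (\ref{eq:L_sub_T}), is the paper's (\ref{eq:GCQ}) --- and not the stronger equality $T^{\circ}=L^{\circ}$ you aim at. Symmetrically, if (\ref{eq:GCQ}) is read as the paper states it (with $T_w$), your sufficiency step needs $-D_uf(u^*)\in T_w^{\circ}(M,u^*)$, i.e.\ the first-order condition relative to the weak tangent cone; this does hold in Hilbert spaces (weakly convergent difference quotients are bounded, which kills the $o(\cdot)$ term), but it is the statement you must use, not the one relative to $T$.

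The repair is mechanical but essential: replace $T$ by $T_w$ throughout, take the realizability lemma in the form $D\mathcal{F}(u^*)=T_w^{\circ}(M,u^*)$ (Gould--Tolle, Theorem 3.2 --- the paper cites this rather than reproving it, and your sketch via a smooth modulus $\gamma$ is essentially the right route to it once the cone is correct), and then both directions close exactly as in the paper. A secondary point: your justification of fact (i) --- ``by construction the surrogate model shares the KKT system,'' together with reading multipliers off the cone generated by active constraint functionals --- is not the right mechanism, and such multiplier representations of $L^{\circ}$ are exactly what can fail in infinite dimensions (that failure is the subject of the rest of the paper). What fact (i) actually rests on is Lemma \ref{lem:TL_L}: since $G'(u^*)$ is bounded and linear, the tangent cone of the linearized feasible set equals $\overline{L(\mathcal{K},u^*)}$, so by convexity of the surrogate, $u^*$ is its global minimizer iff $-D_uf(u^*)\in \overline{L(\mathcal{K},u^*)}^{\circ}=L^{\circ}(\mathcal{K},u^*)$, as in (\ref{eq:c_iff}); no multipliers are involved anywhere in the equivalence.
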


In general the surrogate model is a special case of the following model problem
\begin{equation}\label{eq:sm_g}
\min \theta(u)\quad \mbox{s.t.}\ \ Su \in K,
\end{equation} 
where $\mathcal{U}$ and $\mathcal{X}$ are real Hilbert spaces, $\emptyset \neq K\subseteq \mathcal{X}$ is closed and convex, and $S$ is a bounded linear operator from $\mathcal{U}$ to $\mathcal{X}$. According to the Riesz representation theorem (cf. \cite[Theorem 3.4, Chapter I]{conway2007course}), we set $\mathcal{U}' = \mathcal{U}$ and $\mathcal{X}' = \mathcal{X}$. We assume that the feasible set $\mathrm{R}(S)\cap K\neq \emptyset$, where $\mathrm{R}(S)$ is the range of $S$ and $\theta(u)$ is continuously Fr\'{e}chet differentiable and strongly convex on $\mathcal{U}$, i.e., there exists $c_0>0$ such that 
\begin{equation}\label{eq:theta_sc}
\langle u - v, D_u\theta(u) - D_u\theta(v)\rangle_\mathcal{U} \geq c_0 \|u - v\|^2_\mathcal{U},
\end{equation}
where $D_u\theta(v)$ is the first order Fr\'{e}chet derivative of $\theta(\cdot)$ at $v$, $\langle\cdot, \cdot\rangle_\mathcal{U}$ is the inner product of $\mathcal{U}$ and $\|\cdot\|_{\mathcal{U}}$ is the induced norm. The inner product and the induced norm on $\mathcal{X}$ are denoted by $\langle\cdot, \cdot\rangle$ and $\|\cdot\|$ respectively.

It follows from the assumptions on the model problem (\ref{eq:sm_g}) and the classical convex optimization theory that there exists a unique global minimizer $u^*$ of the model problem (\ref{eq:sm_g}). We will investigate the KKT system of the model problem at $u^*$, and the results can be applied to the surrogate model. To avoid separation theorems, we use an optimization procedure regularization approach to derive the KKT system at $u^*$, which will be realized by the classical augmented Lagrangian method (ALM, for short)(cf. \cite{hestenes1969multiplier, powell1969method}) in this paper. By carrying out the convergence analysis of the classical ALM without using any information of Lagrange multipliers of the model problem (\ref{eq:sm_g}) at $u^*$, we will prove the following theorem (cf. Appendix A). 
\begin{theorem}\label{thm:w-akkt}
A feasible point $u^*$ is a global minimizer of the model problem (\ref{eq:sm_g}) if and only if there exists $\{\lambda^k\}_{k=1}^{+\infty}\subset \mathcal{X}$ such that the following weak form asymptotic KKT system (W-AKKT, for short) holds
\begin{equation}\label{eq:w-akkt}
\left\{
\begin{aligned}
& \langle D_u\theta(u^*), v\rangle_\mathcal{U}  + \lim\limits_{k\rightarrow +\infty}\langle \lambda^{k}, Sv\rangle = 0\quad \forall\ v\in \mathcal{U};\\
& - \uplim\limits_{k\rightarrow+\infty}\langle\lambda^{k} ,\zeta - Su^*\rangle\geq 0\quad \forall\ \zeta\in K.\\
\end{aligned}
\right.
\end{equation}
\end{theorem}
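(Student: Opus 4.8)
The plan is to realize the optimization-procedure regularization by the classical augmented Lagrangian method with an increasing penalty $c_k\to+\infty$, and to read off (\ref{eq:w-akkt}) from the stationarity of the subproblems together with the projection inequality for $K$, never invoking the existence of a multiplier. Concretely, I would let $u^k$ be the unique minimizer of the strongly convex, coercive, $C^1$ penalized objective $\Theta_k(u)=\theta(u)+\tfrac{c_k}{2}\,\mathrm{dist}^2(Su,K)$, and set $\lambda^k=c_k\bigl(Su^k-P_K(Su^k)\bigr)$, so that the first-order condition $D_u\Theta_k(u^k)=D_u\theta(u^k)+S^*\lambda^k=0$ reads $\langle D_u\theta(u^k),v\rangle_\mathcal{U}+\langle\lambda^k,Sv\rangle=0$ for all $v$. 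The two lines of (\ref{eq:w-akkt}) then follow once I prove $u^k\to u^*$ strongly: the first because $S^*\lambda^k=-D_u\theta(u^k)\to -D_u\theta(u^*)$ by continuity of the derivative, so $\langle\lambda^k,Sv\rangle=\langle S^*\lambda^k,v\rangle_\mathcal{U}\to-\langle D_u\theta(u^*),v\rangle_\mathcal{U}$; the second from the projection inequality $\langle\lambda^k,\zeta-P_K(Su^k)\rangle\le 0$ ($\zeta\in K$), which after splitting $\zeta-Su^*=(\zeta-P_K(Su^k))+(P_K(Su^k)-Su^k)+(Su^k-Su^*)$ bounds $\langle\lambda^k,\zeta-Su^*\rangle$ by a nonpositive term $-\|\lambda^k\|^2/c_k$ plus $\langle S^*\lambda^k,u^k-u^*\rangle_\mathcal{U}=-\langle D_u\theta(u^k),u^k-u^*\rangle_\mathcal{U}\to 0$, giving $\limsup_k\langle\lambda^k,\zeta-Su^*\rangle\le 0$.

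For the ``if'' direction (which uses only convexity, not (\ref{eq:theta_sc})): given a feasible $u^*$ and a sequence satisfying (\ref{eq:w-akkt}), take any feasible $u$, use $\theta(u)-\theta(u^*)\ge\langle D_u\theta(u^*),u-u^*\rangle_\mathcal{U}$, rewrite the right side via the first line of (\ref{eq:w-akkt}) as $-\lim_k\langle\lambda^k,Su-Su^*\rangle$, and observe that this limit exists and is $\le 0$ by the second line with $\zeta=Su\in K$. Hence $\theta(u)\ge\theta(u^*)$ for every feasible $u$, so $u^*$ is a global minimizer.

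The crux is the strong convergence $u^k\to u^*$ carried out without a multiplier and without any metric-regularity or error-bound hypothesis relating $\mathrm{dist}(u,S^{-1}(K))$ to $\mathrm{dist}(Su,K)$ (such bounds generally fail, and indeed $\{\lambda^k\}$ may be unbounded in infinite dimensions, which is exactly the phenomenon the paper targets). I would proceed in three moves. First, boundedness: strong convexity gives $c_0\|u^k-u^*\|_\mathcal{U}^2\le\langle D_u\theta(u^k)-D_u\theta(u^*),u^k-u^*\rangle_\mathcal{U}$, and the projection inequality with $\zeta=Su^*$ makes $\langle D_u\theta(u^k),u^k-u^*\rangle_\mathcal{U}=-\langle\lambda^k,Su^k-Su^*\rangle\le 0$, whence $c_0\|u^k-u^*\|_\mathcal{U}^2\le-\langle D_u\theta(u^*),u^k-u^*\rangle_\mathcal{U}\le\|D_u\theta(u^*)\|_\mathcal{U}\,\|u^k-u^*\|_\mathcal{U}$, so $\{u^k\}$ is bounded; comparing $\Theta_k(u^k)$ with $\Theta_k(u^*)=\theta(u^*)$ then yields $\mathrm{dist}(Su^k,K)=\|\lambda^k\|/c_k\to 0$. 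Second, identification of the weak limit: any weak cluster point $u^{k_j}\rightharpoonup\hat u$ has $Su^{k_j}\rightharpoonup S\hat u$ with $\mathrm{dist}(Su^{k_j},K)\to 0$, so $S\hat u\in K$ by weak closedness of $K$; weak lower semicontinuity of $\theta$ together with $\theta(u^k)\le\theta(u^*)$ forces $\theta(\hat u)\le\theta(u^*)$, and uniqueness of the minimizer gives $\hat u=u^*$, i.e. $u^k\rightharpoonup u^*$. Third, upgrade to strong convergence: in $c_0\|u^k-u^*\|_\mathcal{U}^2\le\langle D_u\theta(u^k),u^k-u^*\rangle_\mathcal{U}-\langle D_u\theta(u^*),u^k-u^*\rangle_\mathcal{U}$ the first term is $\le 0$ and the second tends to $0$ because $u^k-u^*\rightharpoonup 0$ is tested against the fixed vector $D_u\theta(u^*)$, hence $\|u^k-u^*\|_\mathcal{U}\to 0$.

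I expect the main obstacle to be precisely this third ingredient, closing the loop from weak to strong convergence, since it is where one is tempted to invoke a multiplier or an error bound. The device that avoids both is the sign information $\langle D_u\theta(u^k),u^k-u^*\rangle_\mathcal{U}\le 0$ coming from the projection inequality, combined with weak convergence to kill the residual inner product, after which strong convexity does the rest. I would also record as preliminary lemmas the minor technical points: existence, uniqueness and $C^1$-smoothness of the penalized subproblems via the Moreau envelope of the indicator of $K$, whose gradient is $2(\,\cdot-P_K(\cdot))$, and the bound $\Theta_k(u^*)\le\theta(u^*)$ used for asymptotic feasibility.
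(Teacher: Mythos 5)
Your proof is correct, but it takes a genuinely different route from the paper's. For the ``only if'' direction the paper does not use a pure quadratic penalty with $c_k\to+\infty$; it runs the classical augmented Lagrangian method (\ref{eq:ALM}) with a \emph{fixed} penalty $\beta>0$, splitting the problem as $\min\{\theta(u)+I_K(\zeta):\ Su=\zeta\}$, and proves strong convergence of $(u^k,\zeta^k)$ with much heavier machinery: Fej\'{e}r monotonicity of the residuals $r^k=Su^k-\zeta^k$, a telescoping Bregman-distance identity giving $\sum_k\|r^k\|^2<+\infty$, and a Cauchy-sequence argument; the W-AKKT system is then read off from the subproblem optimality conditions, exactly as you read it off from $D_u\theta(u^k)+S^*\lambda^k=0$ and the projection inequality. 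Your penalty scheme is shorter and more elementary: the three-step argument (boundedness from strong convexity plus the sign information $\langle D_u\theta(u^k),u^k-u^*\rangle_\mathcal{U}\le 0$, identification of the weak limit, upgrade to strong convergence by testing against the fixed vector $D_u\theta(u^*)$) is clean, avoids Bregman distances entirely, and your ``if'' direction coincides with the paper's convexity argument. What the paper's longer proof buys is precisely the byproduct your proof does not deliver: it establishes global convergence of the classical ALM itself and shows that the \emph{ALM-generated} multipliers satisfy (\ref{eq:w-akkt}), and this is load-bearing later in the paper --- Theorem \ref{thm:ALM-ELM} and its corollary characterize the convergence of those specific ALM multipliers via the essential Lagrange multiplier, and the proofs in Section \ref{sec:w-akkt} take $\{\lambda^k\}$ to be that ALM sequence. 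With your proof, Theorem \ref{thm:w-akkt} stands as stated (the theorem only asserts existence of \emph{some} sequence), but the ALM convergence analysis would still have to be carried out separately to support those downstream results.
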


Motivated by the results in Theorem \ref{thm:w-akkt}, we introduce the essential Lagrange multiplier.
\begin{Definition}\label{def:elm}
An element $\lambda^*\in \overline{R(S)}$ is called an essential Lagrange multiplier of the model problem (\ref{eq:sm_g}) at $u^*$ if it satisfies
\begin{equation}\label{eq:elm}
\left\{
\begin{aligned}
& \langle D_u\theta(u^*), v\rangle_\mathcal{U}  + \langle \lambda^{*}, Sv\rangle = 0\quad \forall\ v\in \mathcal{U};\\
& - \langle\lambda^{*} ,\zeta - Su^*\rangle\geq 0\quad \forall\ \zeta\in \overline{K\cap R(S)},\\
\end{aligned}
\right.
\end{equation}
where $\overline{R(S)}$ and $\overline{K\cap R(S)}$ are the closure of $R(S)$ and $K\cap R(S)$, respectively.
\end{Definition}
\noindent We also recall the definition of the proper Lagrange multiplier and the classical KKT system as follows (cf. \cite[p. 160]{barbu2012convexity}).
\begin{Definition}\label{def:plm}
An element $\bar{\lambda}\in \mathcal{X}$ is called a proper Lagrange multiplier of the model problem (\ref{eq:sm_g}) at $u^*$ if it satisfies the classical KKT system
\begin{equation}\label{eq:plm}
\left\{
\begin{aligned}
& \langle D_u\theta(u^*), v\rangle_\mathcal{U}  + \langle \bar{\lambda}, Sv\rangle = 0\quad \forall\ v\in \mathcal{U};\\
& - \langle\bar{\lambda} ,\zeta - Su^*\rangle\geq 0\quad \forall\ \zeta\in K.\\
\end{aligned}
\right.
\end{equation}
\end{Definition}

Note that the essential Lagrange multiplier of the model problem (\ref{eq:sm_g}) is actually the proper Lagrange multiplier of the optimization problem
\begin{equation}\nonumber
\min \theta(u)\quad \mbox{s.t.}\ \ Su \in \overline{K\cap R(S)}\subset \overline{R(S)},
\end{equation} 
which implies that the essential Lagrange multiplier is only related to the feasible set of the model problem (\ref{eq:sm_g}). This observation inspires us to investigate the proper Lagrange multiplier with the help of the essential Lagrange multiplier. More details on the essential Lagrange multiplier will be given in Section \ref{sec:ELM}. These results indicate that the essential Lagrange multiplier is a fundamental concept in the theory of Lagrange multipliers of constrained optimization.

As an application of the essential Lagrange multiplier, we will consider the convergence of the multipliers generated by the classical ALM for the model problem (\ref{eq:sm_g}). Our results show some equivalence between the convergence of the multipliers and the existence of the essential Lagrange multiplier (see Theorem \ref{thm:ALM-ELM}). This indicates that the essential Lagrange multiplier is of fundamental importance in the application of Lagrange multipliers.

The rest of the paper is organized as follows. In Section \ref{sec:general} a general optimization problem and the related variational inequalities are given, and some results of the surrogate model are also presented there, especially the proof of Theorem \ref{thm:sm_e}. A thorough discussion of the essential Lagrange multiplier and the proper Lagrange multiplier of the model problem (\ref{eq:sm_g}) is included in Section \ref{sec:ELM}. The essential Lagrange multiplier of the model problem (\ref{eq:sm_g}) always exists in finite-dimensional spaces, while this is not the case in the infinite-dimensional spaces. The results in this section also theoretically confirm the necessity of using asymptotic or approximate KKT systems (cf \cite{andreani2019sequential, borgens2020new, jeyakumar2003new, thibault1997sequential}) to give the optimality conditions of constrained optimization problems in infinite-dimensional spaces. Section \ref{sec:w-akkt} is devoted to some further applications of the W-AKKT system. We give an elementary proof of the existence of the proper Lagrange  multiplier under a generalized Robinson's condition. Note that Robinson's condition  (see (\ref{eq:RCQ})) is widely used in the theories and applications of optimization problems in both finite and infinite-dimensional spaces (cf. \cite{ito2008lagrange, mangasarian1967fritz, maurer1979first, robinson1976stability, zowe1979regularity}). The paper ends up with some concluding remarks in Section \ref{sec:con}. Details for the proof of Theorem \ref{thm:w-akkt} are provided in Appendix \ref{append:A}. 

In this paper we use the standard notations from functional analysis and convex analysis, see for example in \cite{barbu2012convexity, bauschke2011convex, conway2007course, rockafellar1997convex, yosida2012functional}. 

\section{A general optimization problem and the surrogate model}\label{sec:general}
\setcounter{equation}{0}
Let us consider the following constrained optimization problem
\begin{equation}\label{eq:GO}
\min f(u)\quad \mbox{s.t.}\ \ G(u) \in \mathcal{K}, 
\end{equation} 
where $f:\ \mathcal{U}\rightarrow \mathbb{R}$, $G:\ \mathcal{U}\rightarrow \mathcal{X}$, $\mathcal{K}$ is a closed and convex set in $\mathcal{X}$, and $\mathcal{U}$, $\mathcal{X}$ are two real Hilbert spaces. Assume that $G(u)\cap \mathcal{K}\neq \emptyset$ and $u^*$ is a minimizer of the optimization problem (\ref{eq:GO}). We will investigate the KKT system of the optimization problem (\ref{eq:GO}) at $u^*$. We first assume that $f$ and $G$ are Fr\'{e}chet differentiable, and then we extend the results to some nonsmooth cases. As aforementioned, we will denote by $\mathcal{F}(u^*)$ the set of all Fr\'{e}chet differentiable objective functions which have a local constrained minimizer at $u^*$.

\subsection{Some preliminary results}
Let $M$ be the feasible set, i.e.,
\begin{equation}\label{eq:FS}
M = \{u\in\mathcal{U}:\ G(u)\in\mathcal{K} \}.
\end{equation} 
We  denote by $T(M, \bar{u})$, $T_w(M, \bar{u})$ and $L(\mathcal{K}, \bar{u})$ the sequential tangent cone,  the weak sequential tangent cone and the linearizing cone at $\bar{u}\in M$ respectively, which are defined by 
\begin{multline}\nonumber
T(M, \bar{u}) = \big\{v\in \mathcal{U}:\ \exists \{u_n\}_{n=1}^{+\infty}\subset M,\ \{t_n\}_{n=1}^{+\infty}\subset \mathbb{R}^+,\ u_n \rightarrow \bar{u}, \ t_n\rightarrow 0^+,\\
  (u_n - \bar{u})/t_n\rightarrow v\big\},
\end{multline}
\begin{multline}\nonumber
T_w(M, \bar{u}) = \big\{v\in \mathcal{U}:\ \exists \{u_n\}_{n=1}^{+\infty}\subset M,\ \{t_n\}_{n=1}^{+\infty}\subset \mathbb{R}^+,\ u_n \rightarrow \bar{u}, \ t_n\rightarrow 0^+, \\
(u_n - \bar{u})/t_n\rightharpoonup v\big\}
\end{multline}
and 
\begin{equation}\label{eq:LC}
L(\mathcal{K}, \bar{u}) = \{tv\in \mathcal{U}:\  G(\bar{u}) + G'(\bar{u})v\in \mathcal{K},\ \forall\ t > 0\}. 
\end{equation}

Let $C\subset \mathcal{U}$. The polar cone of $C$ is defined by
\begin{equation}\label{eq:dual_cone}
C^\circ = \{v\in \mathcal{U} :\ \langle v, w\rangle_{\mathcal{U}}\leq 0\quad \forall\ w\in C\}.
\end{equation}

The following property of the sequential tangent cone,  the weak sequential tangent cone and the linearizing cone is crucial to establish our theory.

\begin{lemma}\label{lem:TL_L}
If $G$ is a bounded linear operator, for any $\bar{u}\in M$, it holds
\begin{equation}\label{eq:TL_L}
  \overline{L(\mathcal{K}, \bar{u})} = T(M, \bar{u}) = T_w(M, \bar{u}).
\end{equation}
\end{lemma}

\begin{proof}
Let $v\in T(M, \bar{u})$, i.e., there exist $\{u_n\}_{n=1}^{+\infty}\subset \mathcal{U}$ and $\{t_n\}_{n=1}^{+\infty}\subset \mathbb{R}^+$ such that 
$$v = \lim\limits_{n\rightarrow +\infty}(u_n - \bar{u})/t_n,\ t_n\rightarrow 0^+,\ u_n\in M,$$
and set $v_n = u_n - \bar{u}$.
Since $G$ is linear and $u_n\in M$, we have
$$G(\bar{u}) + G'(\bar{u})v_n = G(\bar{u} + v_n) = G(u_n)\in \mathcal{K},$$
which implies $v_n/t_n \in L(\mathcal{K}, \bar{u})$ and $v\in \overline{L(\mathcal{K}, \bar{u})}$ by $v = \lim\limits_{n\rightarrow +\infty}v_n/t_n$. This leads to 
$$T(M, \bar{u}) \subset \overline{L(\mathcal{K}, \bar{u})}.$$

For any $0\neq v\in L(\mathcal{K}, \bar{u})$, there exist $v_0\in \mathcal{U}$ and $t_0 > 0$, such that $v = t_0v_0$ and
$$G(\bar{u} + v_0) =  G(\bar{u}) + G(v_0) = G(\bar{u}) + G'(\bar{u})v_0 \in \mathcal{K}.$$
Let  $u_n = \bar{u} +\frac{1}{n}v_0.$ Since $\mathcal{K}$ is convex, we have
$$ G(u_n) = G(\bar{u} +\frac{1}{n}v_0) = \frac{n-1}{n}G(\bar{u}) + \frac{1}{n}G(\bar{u} + v_0) \in \mathcal{K},$$
which implies $u_n \in M$ for any $n\in \mathbb{N}^+$.
Taking $t_n = 1/(nt_0)>0$, it follows
$$ v = \lim\limits_{n\rightarrow +\infty}(u_n - \bar{u})/t_n,$$
which gives $v\in T(M, \bar{u})$ and 
$$L(\mathcal{K}, \bar{u})\subset  T(M, \bar{u}).$$
Note that $T(M, \bar{u})$ is closed (cf. \cite[Theorem 4.10]{jahn2020introduction}). Therefore,
$$\overline{L(\mathcal{K}, \bar{u})} = T(M, \bar{u}).$$ 

Since $G$ is a bounded linear operator, $M$ is closed and convex. This further implies
$$T(M, \bar{u}) = T_w(M, \bar{u})$$
by Proposition 6.1 of  \cite{gould1975optimality}.
This completes the proof.
\end{proof}

\subsection{A first order necessary condition}
According to the classical optimization theory, at the minimizer $u^*$, the following first order necessary condition holds (cf. \cite[Theorem 1]{guignard1969generalized}, \cite[Proposition 1.2]{ito2008lagrange} or \cite[Theorem 4.14]{jahn2020introduction})
\begin{equation}\label{eq:GO_FO}
\langle D_uf(u^*), v\rangle_\mathcal{U}\geq 0\quad \forall\ v\in T_w(M,u^*),
\end{equation}
which is equivalent to 
$$ -D_uf(u^*) \in T_w^{\circ}(M,u^*).$$

We can also consider the variational inequality: Find $u^*\in M$ such that
\begin{equation}\label{eq:VI}
\langle F(u), v\rangle_\mathcal{U}\geq 0\quad \forall\ v\in T_w(M, u),
\end{equation}
where $F:\ \mathcal{U}\rightarrow  \mathcal{U}' (=  \mathcal{U}) $ is a given mapping. It is obvious that at a solution point $u^*$, there holds
$$\langle F(u^*), v\rangle_\mathcal{U}\geq 0\quad \forall\ v\in T_w(M, u^*),$$
which is in the same form of (\ref{eq:GO_FO}).
Therefore, we can deal with these two problems in the same framework, and we will only give the arguments for (\ref{eq:GO_FO}).

\subsection{The surrogate model} In this part we will give the definition of the surrogate model at a minimizer of the optimization problem (\ref{eq:GO}) and prove a fundamental theorem in our theory, i.e., Theorem \ref{thm:sm_e}. 

Note that the linearization problem  of (\ref{eq:GO})  at $u^*$ is 
\begin{equation}\nonumber
\min f(u^*) + \langle D_uf(u^*), u - u^*\rangle_{\mathcal{U}} \quad \mbox{s.t.}\ \ G(u^*) + G'(u^*)(u - u^*) \in \mathcal{K},
\end{equation}
which is equivalent to
\begin{equation}\nonumber
\min f(u^*) + \langle D_uf(u^*), u - u^*\rangle_{\mathcal{U}} \quad \mbox{s.t.}\ \  G'(u^*)u \in \mathcal{K} - G(u^*) + G'(u^*)u^*.
\end{equation}

Let us consider the following optimization problem
\begin{equation}\label{eq:sm}
\min f(u^*) + \langle D_uf(u^*), u - u^*\rangle_{\mathcal{U}} + \frac{c}{2}\|u - u^*\|_{\mathcal{U}}^2 \quad \mbox{s.t.}\ \  G'(u^*)u \in K,
\end{equation}
where $c>0$ and $ K= \mathcal{K} - G(u^*) + G'(u^*)u^*$. Note that the optimization problem (\ref{eq:sm}) is a special case of the model problem (\ref{eq:sm_g}).

The feasible set of this problem is 
$$\tilde{M} = \{u\in \mathcal{U}:\ G'(u^*)u\in K\},$$
and we have some key observations 
\begin{equation}\label{eq:KK_e}
\left\{
\begin{aligned}
K -  G'(u^*)u^* &\ \ = \ \ \mathcal{K} - G(u^*);\\
G'(u^*)u^*\in K &\Longleftrightarrow G(u^*)\in \mathcal{K}
\end{aligned}
\right.
\end{equation}
and
\begin{equation}\label{eq:so_L}
\begin{aligned}
L(K,u^*) = L(\mathcal{K}, u^*).
\end{aligned}
\end{equation}

\begin{lemma}\label{lem:so_KKT}
The classical KKT system (\ref{eq:plm}) of the optimization problem (\ref{eq:sm}) holds at $u^*$ with a proper Lagrange multiplier $\bar{\lambda}\in \mathcal{X}$, i.e., 
\begin{equation}\label{eq:sm_KKT}
\left\{
\begin{aligned}
&\langle D_uf(u^*), v\rangle_{\mathcal{U}} + \langle \bar{\lambda}, G'(u^*)v\rangle = 0\quad \forall\ v\in \mathcal{U};\\
&- \langle \bar{\lambda}, \zeta - G'(u^*)u^*\rangle \geq 0\quad \forall\ \zeta\in  K,\\
\end{aligned}
\right.
\end{equation}
if and only if the classical KKT system of the optimization problem (\ref{eq:GO}) holds at $u^*$ with the same proper Lagrange multiplier $\bar{\lambda}$, i.e.,
\begin{equation}\label{eq:GO_KKT}
\left\{
\begin{aligned}
&\langle D_uf(u^*), v\rangle_{\mathcal{U}} + \langle \bar{\lambda}, G'(u^*)v\rangle = 0\quad \forall\ v\in \mathcal{U};\\
&- \langle \bar{\lambda}, \zeta - G(u^*)\rangle \geq 0\quad \forall\ \zeta\in  \mathcal{K}.\\
\end{aligned}
\right.
\end{equation}
\end{lemma}
\begin{proof}
This follows from (\ref{eq:KK_e}) directly.
\end{proof}

Note that if $u^*$ satisfies (\ref{eq:sm_KKT}), $u^*$ is the global minimizer of the optimization problem (\ref{eq:sm}).
Meanwhile, according to the convex optimization theory, $u^*$ is the global minimizer of the optimization problem (\ref{eq:sm}) if and only if (cf.  \cite[Theorem 1]{guignard1969generalized} or \cite[Theorem 4.14 and Theorem 4.19]{jahn2020introduction})
$$\langle D_uf(u^*), v\rangle_\mathcal{U}\geq 0\quad \forall\ v\in T(\tilde{M},u^*),$$
which is equivalent to
$$\langle D_uf(u^*), v\rangle_\mathcal{U}\geq 0\quad \forall\ v\in \overline{L(\mathcal{K},u^*)}$$
by Lemma \ref{lem:TL_L} and (\ref{eq:so_L}). That is
\begin{equation}\label{eq:c_iff}
-D_uf(u^*) \in \overline{L(\mathcal{K}, u^*)}^\circ = L^\circ(\mathcal{K}, u^*).
\end{equation}

On the other hand, Lemma 4.2 of \cite{gould1975optimality} states that
\begin{equation}\label{eq:L_sub_T}
L^\circ(\mathcal{K}, u^*) \subset T_w^\circ(M,u^*).
\end{equation}
Therefore, the first order necessary condition (\ref{eq:GO_FO}) holds automatically if $u^*$ is the global minimizer of the optimization problem (\ref{eq:sm}). 

 This inspires us to use  the optimization problem (\ref{eq:sm}) to investigate the KKT system of the optimization problem (\ref{eq:GO})  at $u^*$.

\begin{Definition}\label{def:sm}
For $f\in \mathcal{F}(u^*)$, the optimization problem (\ref{eq:sm}) is called a surrogate model of the optimization problem (\ref{eq:GO}) at $u^*$ if $u^*$ is the global minimizer of the optimization problem (\ref{eq:sm}).
\end{Definition}

Theorem \ref{thm:sm_e} establishes the existence results of the surrogate model of the optimization problem (\ref{eq:GO}) at $u^*$. We recall here Guignard's condition (cf. \cite{gould1975optimality, guignard1969generalized}), which is
\begin{equation}\label{eq:GCQ}
T_w^\circ(M,u^*) = L^{\circ}(\mathcal{K}, u^*).
\end{equation}

\subsection{Proof of Theorem \ref{thm:sm_e}} 
For any $f\in \mathcal{F}(u^*)$, since $u^*$ is a global minimizer of the optimization problem (\ref{eq:sm}),  the condition (\ref{eq:c_iff}) should be held, i.e.,
$ -D_uf(u^*)\in L^\circ(\mathcal{K},u^*)$,
which yields
$$D\mathcal{F}(u^*) \subset  L^\circ(\mathcal{K},u^*).$$
Here $D\mathcal{F}(u^*) = \{-D_uf(u^*)\in \mathcal{U}:\ f\in \mathcal{F}(u^*)\}$.
Note that Theorem 3.2 of \cite{gould1975optimality} gives
$$D\mathcal{F}(u^*) = T_w^\circ(M,u^*).$$
Hence, $T_w^\circ(M,u^*) = D\mathcal{F}(u^*)  \subset L^\circ(\mathcal{K}, u^*)$.
Combining with  (\ref{eq:L_sub_T}), we arrive at (\ref{eq:GCQ}).

For the other direction, (\ref{eq:GO_FO}) holds, i.e.,
$$-D_uf(u^*)\in T_w^\circ(M,u^*)$$
 for any $f\in \mathcal{F}(u^*)$. If (\ref{eq:GCQ}) holds, we have (\ref{eq:c_iff}) holds, which implies $u^*$ is a global minimizer of the optimization problem (\ref{eq:sm}). Hence, the optimization problem (\ref{eq:sm}) is a surrogate model of the optimization problem (\ref{eq:GO}) at $u^*$. This completes the proof.

\begin{Remark}
Theorem \ref{thm:sm_e} actually gives the mathematical foundation of quadratic programming(QP)-based methods. If Guignard's condition (\ref{eq:GCQ}) fails, there exists an objective function $f\in \mathcal{F}(u^*)$ such that the minimizer $u^*$ of the original problem (\ref{eq:GO}) can not be obtained by solving quadratic programming problems. In other words, the QP-based methods fail in this case.
\end{Remark}

\begin{Remark}
Theorem \ref{thm:sm_e} also indicates that for non-convex optimization problems, if ones add any assumptions implying Guignard's condition (\ref{eq:GCQ}) (which is usually the case in the literature) and do the local theoretical analysis, they actually do the analysis for some convex optimization problems. In this sense, most of the existing theoretical results for non-convex optimization problems are essentially the results for convex optimization problems.
\end{Remark}

\subsection{Nonsmooth cases} In the previous arguments, we assume that $f$ and $G$ are Fr\'{e}chet differentiable. It is worth noting that the theory in this paper can also be applied to some nonsmooth cases, for example the case where $f$ and $G$ are only semismooth. In this case, we can choose $p_{u^*}\in \partial f(u^*)$ and $S_{u^*}\in \partial G(u^*)$ to do the analysis. If there exist $p_{u^*}\in \partial f(u^*)$ and $S_{u^*}\in \partial G(u^*)$ such that
\begin{enumerate}[(1)]
\item $\langle p_{u^*}, v\rangle_\mathcal{U}\geq 0\quad \forall\ v\in T_w(M,u^*)$;

\item $T_w^\circ(M,u^*) = L^\circ(\mathcal{K}, u^*, S_{u^*})$, where $L(\mathcal{K}, u^*, S_{u^*})$ is $L(\mathcal{K}, u^*)$ with $G'(u^*) = S_u^*$,
\end{enumerate}
then the surrogate model can be defined as
\begin{equation}\nonumber
\min f(u^*) + \langle p_{u^*}, u - u^*\rangle_{\mathcal{U}} + \frac{c}{2}\|u - u^*\|_{\mathcal{U}}^2 \quad \mbox{s.t.}\ \  S_{u^*}u \in K = \mathcal{K} - G(u^*) + S_{u^*}u^*
\end{equation}
for any $c>0$.

\section{The essential Lagrange multiplier}\label{sec:ELM}
\setcounter{equation}{0}
In this section we will establish the basic theory of the essential Lagrange multiplier and the proper Lagrange multiplier of the model problem (\ref{eq:sm_g}). The existence of the essential Lagrange multiplier shows essential differences in finite and infinite-dimensional spaces. As an application of the essential Lagrange multiplier, we will use it to characterize the convergence of the multipliers generated by the classical ALM (see (\ref{eq:ALM})). 

Without further explanation, we always assume that $u^*$ is the global minimizer of the model problem (\ref{eq:sm_g}) in this section. 

\subsection{The essential Lagrange multiplier}
We will establish the existence and uniqueness theory of the essential Lagrange multiplier (see Definition \ref{def:elm}) here. Our results indicate the existence theory of the essential Lagrange multiplier is different in finite and infinite-dimensional cases. More precisely, the essential Lagrange multiplier always exists in the finite-dimensional case  (see Corollary \ref{cor:finite-d}), while in the infinite-dimensional case, this is no longer true (cf. \cite{hinze2008optimization, ito2008lagrange}). This can not be achieved by the existing optimization theory based on separation theorems for the proper Lagrange multiplier, because the interior point condition is essential for separation theorems in infinite-dimensional spaces (cf. \cite[Chapter 1, Remark 4]{brezis2011functional}).

\begin{theorem}\label{thm:ELM_unique}
The essential Lagrange multiplier is unique.
\end{theorem}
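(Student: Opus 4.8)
The plan is to prove uniqueness of the essential Lagrange multiplier by assuming two such multipliers $\lambda_1^*, \lambda_2^* \in \overline{R(S)}$ both satisfy the defining system (\ref{eq:elm}), and showing their difference vanishes. Setting $\mu = \lambda_1^* - \lambda_2^*$, I would first subtract the two copies of the equality constraint (the first line of (\ref{eq:elm})). Since both satisfy $\langle D_u\theta(u^*), v\rangle_{\mathcal{U}} + \langle \lambda_i^*, Sv\rangle = 0$ for all $v \in \mathcal{U}$, subtracting gives $\langle \mu, Sv\rangle = 0$ for all $v \in \mathcal{U}$, i.e. $\langle \mu, w\rangle = 0$ for every $w \in R(S)$.

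The key step is then to combine this orthogonality with the membership constraint $\mu \in \overline{R(S)}$. Because $\mu$ lies in the closure of $R(S)$, and $\langle \mu, w\rangle = 0$ for all $w \in R(S)$, by continuity of the inner product the relation $\langle \mu, w\rangle = 0$ extends to all $w \in \overline{R(S)}$. In particular, taking $w = \mu$ itself, we get $\|\mu\|^2 = \langle \mu, \mu\rangle = 0$, hence $\mu = 0$ and $\lambda_1^* = \lambda_2^*$. The essential point, and the reason the space $\overline{R(S)}$ appears in Definition \ref{def:elm} rather than the full space $\mathcal{X}$, is exactly this: orthogonality to $R(S)$ forces a vector in $\overline{R(S)}$ to be zero, which would fail for a general element of $\mathcal{X}$ (where only the component in $\overline{R(S)}$ would be pinned down).

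I do not expect a genuine obstacle here, since the argument is a clean application of the fact that the orthogonal complement of $\overline{R(S)}$ intersects $\overline{R(S)}$ only in $\{0\}$. The only point requiring minor care is the density/continuity passage from $R(S)$ to $\overline{R(S)}$: given $\mu \in \overline{R(S)}$, I would pick a sequence $w_n \in R(S)$ with $w_n \to \mu$, note $\langle \mu, w_n\rangle = 0$ for each $n$, and pass to the limit to conclude $\langle \mu, \mu\rangle = 0$. Notably, the variational-inequality part of (\ref{eq:elm}) (the second line, involving $\overline{K \cap R(S)}$) plays no role in the uniqueness proof — uniqueness is forced entirely by the equality constraint together with the constraint $\lambda^* \in \overline{R(S)}$, which is worth remarking on since it shows the strong convexity of $\theta$ and the set $K$ are irrelevant to uniqueness.
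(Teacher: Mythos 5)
Your proof is correct and follows essentially the same route as the paper: subtract the two equality constraints to get $\langle \lambda_1^* - \lambda_2^*, Sv\rangle = 0$ for all $v\in\mathcal{U}$, then conclude $\lambda_1^* = \lambda_2^*$ from membership in $\overline{R(S)}$. The paper leaves that last step implicit; your density/continuity argument (orthogonality to $R(S)$ extends to $\overline{R(S)}$, so $\|\lambda_1^*-\lambda_2^*\|^2=0$) is exactly the justification it relies on.
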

\begin{proof}
This can be derived from Definition \ref{def:elm} directly. 
\end{proof}

\begin{theorem}\label{thm:ELM_exist_iff}
The essential Lagrange multiplier exists at the global minimizer $u^*$ of the model problem (\ref{eq:sm_g}) if and only if 
$$ -D_u\theta(u^*) \in R(S^*),$$
where $S^*$ is the adjoint operator of $S$.
\end{theorem}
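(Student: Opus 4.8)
The plan is to characterize the existence of the essential Lagrange multiplier through the structure of the first equation in \eqref{eq:elm}. The essential Lagrange multiplier $\lambda^*$ is required to lie in $\overline{R(S)}$ and to satisfy, among other conditions, the linear equation
\begin{equation}\nonumber
\langle D_u\theta(u^*), v\rangle_{\mathcal{U}} + \langle \lambda^*, Sv\rangle = 0 \quad \forall\, v \in \mathcal{U}.
\end{equation}
The key observation is that this equation can be rewritten using the adjoint operator $S^*$: since $\langle \lambda^*, Sv\rangle = \langle S^*\lambda^*, v\rangle_{\mathcal{U}}$ for all $v$, the equation is equivalent to $S^*\lambda^* = -D_u\theta(u^*)$. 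Thus the existence of \emph{any} $\lambda^* \in \mathcal{X}$ satisfying the first equation is exactly the statement that $-D_u\theta(u^*) \in R(S^*)$. This gives the necessity direction immediately: if an essential Lagrange multiplier exists, then $-D_u\theta(u^*) = S^*\lambda^* \in R(S^*)$.

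For the sufficiency direction, I would suppose $-D_u\theta(u^*) \in R(S^*)$, so there exists some $\mu \in \mathcal{X}$ with $S^*\mu = -D_u\theta(u^*)$. The first issue is that $\mu$ need not lie in $\overline{R(S)}$, so I must project. I would write the orthogonal decomposition $\mathcal{X} = \overline{R(S)} \oplus \overline{R(S)}^\perp$ and set $\lambda^* = P_{\overline{R(S)}}\,\mu$, the orthogonal projection of $\mu$ onto $\overline{R(S)}$. The crucial algebraic fact is that $S^*$ annihilates $\overline{R(S)}^\perp$: since $\overline{R(S)}^\perp = N(S^*)$ (the kernel of $S^*$, a standard duality relation in Hilbert spaces), the component $\mu - \lambda^*$ lies in $N(S^*)$, so $S^*\lambda^* = S^*\mu = -D_u\theta(u^*)$. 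This produces a candidate $\lambda^* \in \overline{R(S)}$ satisfying the first equation.

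The main obstacle is verifying that this candidate $\lambda^*$ also satisfies the second (variational inequality) condition in \eqref{eq:elm}, namely $-\langle \lambda^*, \zeta - Su^*\rangle \geq 0$ for all $\zeta \in \overline{K\cap R(S)}$. This is where the optimality of $u^*$ must enter, and it is not a purely algebraic consequence of the adjoint relation. Here I would invoke the reinterpretation noted in the text just after Definition \ref{def:elm}: the essential Lagrange multiplier is precisely the proper Lagrange multiplier of the modified problem $\min \theta(u)$ subject to $Su \in \overline{K\cap R(S)}$. Since the feasible set $\overline{K\cap R(S)}$ is closed and convex and $u^*$ remains the global minimizer of this modified strongly convex problem, the variational inequality should follow from the first-order optimality characterization once the gradient equation is satisfied. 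Concretely, I expect to argue that for the feasible point $u^*$ and any feasible direction toward $\zeta$, strong convexity together with $S^*\lambda^* = -D_u\theta(u^*)$ forces the sign condition; the care needed is to ensure $Su^* \in \overline{K\cap R(S)}$ and that the variational inequality over the closed convex feasible set is equivalent to the stated polar-cone condition.

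An alternative and cleaner route for sufficiency would be to bypass the direct verification by citing Theorem \ref{thm:w-akkt}: the W-AKKT system always holds at the minimizer $u^*$, providing a sequence $\{\lambda^k\}$ realizing the asymptotic conditions. I would then show that under the hypothesis $-D_u\theta(u^*) \in R(S^*)$, the projections $P_{\overline{R(S)}}\lambda^k$ (or a suitable manipulation thereof) can be passed to a genuine limit satisfying \eqref{eq:elm} exactly, converting the asymptotic inequality into the exact variational inequality. Whichever route is chosen, the uniqueness established in Theorem \ref{thm:ELM_unique} guarantees that the constructed $\lambda^*$ is \emph{the} essential Lagrange multiplier, closing the equivalence.
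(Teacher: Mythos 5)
Your proposal is correct, and your primary route (Route 1) is genuinely different from the paper's proof. The paper handles sufficiency by invoking Theorem \ref{thm:w-akkt}: given $\bar{\lambda}^*$ with $S^*\bar{\lambda}^* = -D_u\theta(u^*)$, it takes the W-AKKT sequence $\{\lambda^k\}$ guaranteed at the minimizer, observes that the first equation forces $\lim_{k}\langle\lambda^k, Sv\rangle = \langle\bar{\lambda}^*, Sv\rangle$ for all $v$, and then transfers the asymptotic inequality in (\ref{eq:w-akkt}) to $\bar{\lambda}^*$ on $K\cap R(S)$, extending to $\overline{K\cap R(S)}$ by continuity and finally restricting to $\overline{R(S)}$ --- so the heavy machinery of Appendix A (the ALM convergence analysis) sits underneath the paper's argument; this is essentially your Route 2. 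Your Route 1 bypasses all of that: project a preimage $\mu$ onto $\overline{R(S)}$ (using $\overline{R(S)}^\perp = N(S^*)$, so the projection still satisfies the adjoint equation), and then obtain the variational inequality directly from the elementary first-order necessary condition for a minimizer over the convex feasible set $M$: for $\zeta = Sv \in K\cap R(S)$ one has
\begin{equation}\nonumber
-\langle \lambda^*, \zeta - Su^*\rangle = -\langle S^*\lambda^*, v - u^*\rangle_{\mathcal{U}} = \langle D_u\theta(u^*), v - u^*\rangle_{\mathcal{U}} \geq 0,
\end{equation}
and continuity of $\langle\lambda^*,\cdot\rangle$ extends this to $\overline{K\cap R(S)}$. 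This one-line identity closes the step you flagged as the ``main obstacle,'' and it requires no constraint qualification and no asymptotic machinery, only Hilbert space duality and the standard optimality condition over a convex set (which the paper itself cites, e.g.\ in the proof of Theorem \ref{thm:optimality}); in this sense your argument is more elementary and self-contained, while the paper's route has the advantage of reinforcing the W-AKKT system as the unifying tool of its framework. Two small corrections to your write-up: strong convexity of $\theta$ plays no role here (the first-order condition at a minimizer over a convex set needs neither strong convexity nor even convexity of $\theta$), and the detour through the reinterpretation of the essential multiplier as a proper multiplier of the modified problem is unnecessary once you have the displayed identity --- though your observation that the feasible sets $\{u : Su\in K\}$ and $\{u: Su \in \overline{K\cap R(S)}\}$ coincide (since $K$ is closed) is what makes that detour harmless.
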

\begin{proof}
If the essential Lagrange multiplier $\lambda^*$ exists, according to its definition, we have
$$ -D_u\theta(u^*) = S^*\lambda^*,$$
which means $-D_u\theta(u^*) \in R(S^*)$.

If $-D_u\theta(u^*) \in R(S^*)$, there exists $\bar{\lambda}^*\in \mathcal{X}$ such that
$$-D_u\theta(u^*) = S^*\bar{\lambda}^*,$$ 
which is equivalent to
\begin{equation}\label{eq:elm-iff1}
\langle D_u\theta(u^*), v\rangle_\mathcal{U} + \langle \bar{\lambda}^*, Sv\rangle = 0\quad \forall\ v\in \mathcal{U}.
\end{equation}
Since $u^*$ is the global minimizer, by Theorem \ref{thm:w-akkt}, there exists $\{\lambda^k\}_{k=1}^{+\infty}\subset \mathcal{X}$ such that the W-AKKT system (\ref{eq:w-akkt}) holds. Therefore,  
$$\lim\limits_{k\rightarrow + \infty}\langle \lambda^k, Sv\rangle = \langle \bar{\lambda}^*, Sv\rangle\quad \forall\ v\in \mathcal{U},$$
by (\ref{eq:elm-iff1}), and then
$$ - \langle \bar{\lambda}^*, Sv - Su^* \rangle =  -\lim\limits_{k\rightarrow + \infty}\langle \lambda^k, Sv - Su^*\rangle \geq -  \uplim\limits_{k\rightarrow + \infty}\langle \lambda^k, Sv - Su^*\rangle \geq 0\quad \forall\ v\in \mathcal{U},$$
by (\ref{eq:w-akkt}). Finally, by the boundedness of $\bar{\lambda}^*$, we arrive at
$$ - \langle \bar{\lambda}^*, \zeta - Su^* \rangle \geq 0\quad \forall\ \zeta\in \overline{K\cap R(S)},$$
which, together with (\ref{eq:elm-iff1}), implies that the restriction of $\bar{\lambda}^*$ to $\overline{R(S)}$ is the essential Lagrange multiplier. This gives the existence of the essential Lagrange multiplier.
\end{proof}

\begin{theorem}\label{thm:ELM_exist_aiff}
If $R(S)$ is closed in $\mathcal{X}$, then the essential Lagrange multiplier exists. Conversely,  if the essential Lagrange multiplier always exists at $u^*$  for any $K$ and $\theta(\cdot)$ satisfying the assumptions of the model problem (\ref{eq:sm_g}), then $R(S)$ is closed in $\mathcal{X}$. 
\end{theorem}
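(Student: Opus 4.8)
The plan is to reduce both implications to Theorem \ref{thm:ELM_exist_iff}, which characterizes existence of the essential Lagrange multiplier by the single membership $-D_u\theta(u^*)\in R(S^*)$, and to combine it with two standard Hilbert space facts: the identity $\overline{R(S^*)}=N(S)^\perp$ and the Closed Range Theorem, which asserts that $R(S)$ is closed if and only if $R(S^*)$ is closed. The pivotal observation, valid \emph{without} any closedness assumption, is that the gradient at a global minimizer always lands in the closure $\overline{R(S^*)}$; the theorem then becomes the statement that this membership can be upgraded from $\overline{R(S^*)}$ to $R(S^*)$ itself exactly when the range is closed. To establish this key fact, let $v\in N(S)$, so $Sv=0$. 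Since $u^*$ is feasible, $S(u^*+tv)=Su^*\in K$ for every $t\in\mathbb{R}$, so the entire line $u^*+tv$ is feasible; as $u^*$ is the global minimizer, the differentiable map $t\mapsto\theta(u^*+tv)$ attains an unconstrained minimum at $t=0$, whence $\langle D_u\theta(u^*),v\rangle_\mathcal{U}=0$. As $v\in N(S)$ was arbitrary, $D_u\theta(u^*)\in N(S)^\perp=\overline{R(S^*)}$. (Alternatively this follows from the first line of the W-AKKT system in Theorem \ref{thm:w-akkt}, which gives $S^*\lambda^k\rightharpoonup -D_u\theta(u^*)$.)

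For the forward implication, suppose $R(S)$ is closed. By the Closed Range Theorem $R(S^*)$ is closed, so $\overline{R(S^*)}=R(S^*)$; combined with the key fact this yields $-D_u\theta(u^*)\in R(S^*)$, and Theorem \ref{thm:ELM_exist_iff} delivers the essential Lagrange multiplier.

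For the converse I would argue contrapositively, constructing for a prescribed $g\in\overline{R(S^*)}$ an instance of the model problem whose minimizer realizes $-D_u\theta(u^*)=g$. Concretely, take $K=\{0\}$ and $\theta(u)=\tfrac{c_0}{2}\|u\|_\mathcal{U}^2-\langle g,u\rangle_\mathcal{U}$. This $\theta$ is continuously Fr\'echet differentiable and strongly convex with modulus $c_0$, while $K$ is closed, convex and nonempty, and $R(S)\cap K=\{0\}\neq\emptyset$, so all hypotheses of (\ref{eq:sm_g}) hold. The feasible set is $N(S)$, and since $g\in\overline{R(S^*)}=N(S)^\perp$ we have $\langle g,u\rangle_\mathcal{U}=0$ on $N(S)$, so $\theta$ restricts to $\tfrac{c_0}{2}\|\cdot\|_\mathcal{U}^2$ there and its unique global minimizer is $u^*=0$, with $-D_u\theta(0)=g$. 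Now if $R(S)$ is \emph{not} closed, then $R(S^*)$ is not closed, so we may choose $g\in\overline{R(S^*)}\setminus R(S^*)$; the problem just built has $-D_u\theta(u^*)=g\notin R(S^*)$, so by Theorem \ref{thm:ELM_exist_iff} it admits no essential Lagrange multiplier, contradicting the hypothesis that one always exists. Hence $R(S)$ must be closed.

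I expect the main obstacle to be the converse: the difficulty lies not in any estimate but in selecting the data $(K,\theta)$ so that simultaneously (i) every structural hypothesis of the model problem is satisfied and (ii) the induced minimizer $u^*$ realizes the prescribed gradient $-g$. Taking $K=\{0\}$ collapses the feasible set to $N(S)$, and it is precisely the orthogonality $g\perp N(S)$—available for free from $g\in\overline{R(S^*)}$—that forces $u^*=0$ to be the global minimizer; verifying this compatibility is the crux of the argument, whereas the forward direction and the appeal to the Closed Range Theorem are routine.
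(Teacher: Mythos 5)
Your proof is correct, and its skeleton---reduce both directions to Theorem \ref{thm:ELM_exist_iff} plus the closed range theorem---matches the paper's, but both halves are implemented differently, and your implementations are sound. For the forward direction, the paper obtains the key membership $-D_u\theta(u^*)\in\overline{R(S^*)}$ from the W-AKKT system of Theorem \ref{thm:w-akkt} (the sequence $S^*\lambda^k$ converges weakly to $-D_u\theta(u^*)$, and a closed subspace is weakly closed), whereas you prove it directly: for $v\in\mathrm{Ker}(S)$ the whole line $u^*+tv$ is feasible, so $t\mapsto\theta(u^*+tv)$ is minimized over all of $\mathbb{R}$ at $t=0$, giving $D_u\theta(u^*)\perp\mathrm{Ker}(S)$, i.e.\ $D_u\theta(u^*)\in\mathrm{Ker}(S)^\perp=\overline{R(S^*)}$. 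Your argument is more elementary and self-contained, since it bypasses the ALM convergence machinery entirely, at the cost of departing from the paper's program of deriving everything from the W-AKKT system. For the converse, the two constructions are mirror images of one another: the paper fixes $\theta(u)=\frac{1}{2}\|u\|_{\mathcal{U}}^2$ and translates the constraint set, taking $K=\{Su_0\}$ with $u_0\in\mathrm{Ker}(S)^\perp$ arbitrary, so that the minimizer is $u_0$ and the gradient sweeps out all of $\mathrm{Ker}(S)^\perp=\overline{R(S^*)}$, forcing $\overline{R(S^*)}\subset R(S^*)$; you instead fix $K=\{0\}$ and tilt the objective by a single witness $g\in\overline{R(S^*)}\setminus R(S^*)$, reaching a contradiction contrapositively. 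Both routes realize an arbitrary element of $\overline{R(S^*)}$ as $-D_u\theta(u^*)$ at the minimizer of a legitimate instance of (\ref{eq:sm_g}); the paper's version quantifies over a family of problems with a fixed objective, yours exhibits one bad problem. The verifications you flag as the crux (strong convexity of the tilted $\theta$ with modulus $c_0$ in the sense of (\ref{eq:theta_sc}), nonemptiness of $R(S)\cap K$, and the orthogonality $g\perp\mathrm{Ker}(S)$ forcing $u^*=0$) all go through, so there is no gap.
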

\begin{proof}
According to Theorem \ref{thm:w-akkt}, there exists $\{\lambda^k\}_{k=1}^{+\infty}\subset \mathcal{X}$ such that
$$ \lim\limits_{k\rightarrow +\infty}\langle S^*\lambda^k + D_u\theta(u^*), v \rangle_\mathcal{U} = 0\quad \forall\ v\in \mathcal{U},$$
i.e., $\{S^*\lambda^k\}_{k=1}^{+\infty}$ weakly converges to $-D_u\theta(u^*)$ in $\mathcal{U}$. 

If $R(S)$ is closed, by the closed range theorem (cf. \cite[p. 205]{yosida2012functional}), $R(S^*)$ is closed, which further is weakly closed. Therefore, $-D_u\theta(u^*)\in R(S^*)$ and then by Theorem \ref{thm:ELM_exist_iff}, we have the existence of the essential Lagrange multiplier.

Let $u_0\in \mathrm{Ker}(S)^\perp$ be arbitrary, $\theta(u) =  \frac{1}{2}\|u\|_{\mathcal{U}}^2$ and  $ K = \{Su_0\}$. In this case the global minimizer $u^* = u_0$ and $D_u\theta(u^*) = u_0\in \mathrm{Ker}(S)^\perp$. Note that $\mathrm{Ker}(S)^\perp=\overline{R(S^*)}$. Since $u_0$ is arbitrary, by the assumption on the existence of the essential Lagrange multiplier and Theorem  \ref{thm:ELM_exist_iff}, we have $\mathrm{Ker}(S)^\perp\subset R(S^*)$, which gives $\overline{R(S^*)} \subset R(S^*)$. Therefore, $R(S^*) = \overline{R(S^*)}$, which is equivalent to $R(S) = \overline{R(S)}$ by the closed range theorem, i.e., $R(S)$ is closed. 
\end{proof}

\begin{Remark}
Theorem \ref{thm:ELM_exist_aiff} implies that the condition that $R(S)$ is closed in $\mathcal{X}$ is sufficient and almost necessary for the existence of the essential Lagrange multiplier.
\end{Remark}

Note that if $R(S)$ is a finite-dimensional space, $R(S)$ is closed. We have the following corollaries.
\begin{Corollary}\label{cor:finite}
If $R(S)$ is a finite-dimensional space, then the essential Lagrange multiplier exists.
\end{Corollary}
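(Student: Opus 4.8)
The plan is to derive Corollary~\ref{cor:finite} as an immediate consequence of Theorem~\ref{thm:ELM_exist_aiff}. The key observation is that the first (sufficient) direction of that theorem already does all the work: it asserts that whenever $R(S)$ is closed in $\mathcal{X}$, the essential Lagrange multiplier exists. So the only thing left to verify is that the hypothesis ``$R(S)$ is finite-dimensional'' implies the hypothesis ``$R(S)$ is closed in $\mathcal{X}$,'' after which the conclusion follows directly.

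First I would recall the standard functional-analytic fact that every finite-dimensional subspace of a normed (in particular, Hilbert) space is closed. The short argument is that a finite-dimensional subspace is complete, being linearly homeomorphic to $\mathbb{R}^n$ (or $\mathbb{C}^n$) via a choice of basis, since all norms on a finite-dimensional vector space are equivalent; and a complete subspace of a metric space is closed. This is precisely the remark made just before the corollary in the excerpt (``Note that if $R(S)$ is a finite-dimensional space, $R(S)$ is closed''), so I would simply invoke it.

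With closedness of $R(S)$ in hand, I would apply the first statement of Theorem~\ref{thm:ELM_exist_aiff} verbatim to conclude that the essential Lagrange multiplier exists at the global minimizer $u^*$. No further estimates or constructions are needed: the corollary is a specialization of the theorem to a sufficient condition that is automatically met in the finite-dimensional setting.

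There is essentially no obstacle here. The only point one must be slightly careful about is not confusing ``$R(S)$ finite-dimensional'' with ``$\mathcal{X}$ finite-dimensional'': the corollary needs only the former, and the closedness argument applies to the range as a subspace regardless of the dimension of the ambient space $\mathcal{X}$. Since Theorem~\ref{thm:ELM_exist_aiff} is already established and the dimension reduction to closedness is elementary, the proof is a one-line deduction.
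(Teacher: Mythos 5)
Your proof is correct and follows exactly the paper's route: the paper itself derives this corollary by the remark that a finite-dimensional subspace $R(S)$ is automatically closed, and then invoking the sufficiency direction of Theorem~\ref{thm:ELM_exist_aiff}. Your additional care in distinguishing ``$R(S)$ finite-dimensional'' from ``$\mathcal{X}$ finite-dimensional'' is a sensible clarification but does not change the argument.
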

\begin{Corollary}\label{cor:finite-d}
If $\mathcal{X}$ is a finite-dimensional space, then the essential Lagrange multiplier exists.
\end{Corollary}

The essential Lagrange multiplier can also be used to characterize the optimality of a feasible point.

\begin{theorem}\label{thm:optimality}
If the essential Lagrange multiplier exists at a feasible point $u^*$, then $u^*$ is the global minimizer of the model problem (\ref{eq:sm_g}).
\end{theorem}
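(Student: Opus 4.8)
The plan is to reduce the claim to the standard first-order variational characterization of a minimizer for a convex differentiable objective over a convex feasible set, and then to verify that characterization directly from the two defining conditions of the essential Lagrange multiplier in (\ref{eq:elm}).

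First I would record two structural facts. Since $S$ is a bounded linear operator and $K$ is convex, the feasible set $\{u\in\mathcal{U}:\ Su\in K\}$ is convex; and since $\theta$ is strongly convex (hence convex and Fr\'{e}chet differentiable), the convexity inequality $\theta(u)\ge \theta(u^*)+\langle D_u\theta(u^*),u-u^*\rangle_{\mathcal{U}}$ holds for all $u$. Consequently, to prove that $u^*$ is the global minimizer it suffices to establish the variational inequality $\langle D_u\theta(u^*),u-u^*\rangle_{\mathcal{U}}\ge 0$ for every feasible $u$: the convexity inequality then immediately yields $\theta(u)\ge\theta(u^*)$, and feasibility of $u^*$ (which is part of the hypothesis) makes the comparison meaningful.

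The core step is to produce this variational inequality from (\ref{eq:elm}). For an arbitrary feasible point $u$ one has $Su\in K$ and trivially $Su\in R(S)$, hence $Su\in K\cap R(S)\subseteq\overline{K\cap R(S)}$. This is the one observation that makes the argument work: although the complementarity condition in (\ref{eq:elm}) is posed over the smaller set $\overline{K\cap R(S)}$ rather than over $K$, every feasible point of (\ref{eq:sm_g}) is automatically mapped into $\overline{K\cap R(S)}$ because its image under $S$ lies in $R(S)$. Taking $\zeta=Su$ in the second line of (\ref{eq:elm}) then gives $-\langle\lambda^*,Su-Su^*\rangle\ge 0$, and substituting the first line of (\ref{eq:elm}) with $v=u-u^*$, namely $\langle D_u\theta(u^*),u-u^*\rangle_{\mathcal{U}}=-\langle\lambda^*,S(u-u^*)\rangle$, yields exactly $\langle D_u\theta(u^*),u-u^*\rangle_{\mathcal{U}}\ge 0$.

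I do not expect a substantive obstacle: once the reduction to the first-order condition is in place, the computation is a one-line substitution. The only point deserving care is the inclusion $Su\in\overline{K\cap R(S)}$ for feasible $u$, which is precisely what allows the complementarity condition of the essential multiplier---stated over $\overline{K\cap R(S)}$ and not over all of $K$---to be invoked. This is the manifestation, inside the optimality proof, of the earlier remark that the essential Lagrange multiplier is the proper multiplier of the reformulated constraint $Su\in\overline{K\cap R(S)}$, whose feasible set contains that of (\ref{eq:sm_g}).
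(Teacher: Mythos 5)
Your proposal is correct and follows essentially the same route as the paper: reduce the claim to the first-order variational inequality $\langle D_u\theta(u^*), u-u^*\rangle_{\mathcal{U}}\ge 0$ over the feasible set, then obtain it by combining the stationarity and complementarity conditions of (\ref{eq:elm}). The only difference is presentational: you make explicit the inclusion $Su\in K\cap R(S)\subseteq\overline{K\cap R(S)}$ that justifies invoking the complementarity condition at $\zeta=Su$, a step the paper uses silently.
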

\begin{proof}
According to the convex optimization theory, it suffices to show that (cf. \cite[Proposition 26.5]{bauschke2011convex}, \cite[Proposition 2.1, Chapter II]{ekeland1999convex} or \cite[Corollary 4.20]{jahn2020introduction}) 
$$ \langle D_u\theta(u^*), v - u^*\rangle_\mathcal{U} \geq 0\quad\ \forall\ Sv\in K.$$
Let $\lambda^*$ be the essential Lagrange multiplier. The definition of $\lambda^*$ gives
$$
\langle D_u\theta(u^*), v - u^*\rangle_\mathcal{U} 
 = -\langle \lambda^{*}, S(v - u^*)\rangle 
 =  -\langle \lambda^{*}, Sv - Su^*\rangle
 \geq 0\quad \forall\ Sv\in K,
 $$
 which completes the proof.
\end{proof}

Furthermore, Theorem \ref{thm:ELM_exist_aiff} and Theorem \ref{thm:optimality} lead to the following corollary.
\begin{Corollary}
Assume that $R(S)$ is closed in $\mathcal{X}$, a feasible point $u^*$ is the global minimizer of the model problem (\ref{eq:sm_g}) if and only if the essential Lagrange multiplier exists at $u^*$.
\end{Corollary}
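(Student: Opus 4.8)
The final statement to prove is the Corollary following Theorem \ref{thm:optimality}: assuming $R(S)$ is closed in $\mathcal{X}$, a feasible point $u^*$ is the global minimizer of the model problem (\ref{eq:sm_g}) if and only if the essential Lagrange multiplier exists at $u^*$.

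The plan is to prove the two implications separately, stitching together the two cited results exactly as the paragraph preceding the corollary suggests. The statement is a biconditional, and both directions are essentially immediate once the hypothesis $R(S)$ closed is in force, so the main work is simply to invoke the right prior result for each direction and check that its hypotheses are met.

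First I would prove the direction ``global minimizer $\implies$ essential Lagrange multiplier exists.'' Suppose $u^*$ is the global minimizer of (\ref{eq:sm_g}). Since $R(S)$ is assumed closed in $\mathcal{X}$, Theorem \ref{thm:ELM_exist_aiff} (the forward, sufficiency half) applies directly and yields the existence of the essential Lagrange multiplier at $u^*$. It is worth noting that Theorem \ref{thm:ELM_exist_aiff} already presupposes that $u^*$ is the global minimizer (this is the standing assumption of the whole subsection), so no extra verification is needed; the closedness of $R(S)$ is the only additional ingredient and it is exactly our hypothesis.

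For the converse, ``essential Lagrange multiplier exists $\implies$ global minimizer,'' I would appeal to Theorem \ref{thm:optimality}. That theorem asserts that if the essential Lagrange multiplier exists at a feasible point $u^*$, then $u^*$ is the global minimizer of (\ref{eq:sm_g}), and it requires only feasibility of $u^*$, not closedness of $R(S)$. Hence this direction holds unconditionally and in particular under our hypothesis. Combining the two directions gives the stated equivalence.

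I do not anticipate a genuine obstacle here, since the corollary is a direct synthesis of the two preceding theorems; the only care required is bookkeeping. Specifically, I would double-check that the forward direction of Theorem \ref{thm:ELM_exist_aiff} is being applied to the \emph{given} $K$ and $\theta$ of our problem (it is, since that theorem's sufficiency claim holds for every admissible $K$ and $\theta$), and that Theorem \ref{thm:optimality} indeed needs only feasibility so that the converse direction does not secretly rely on any regularity of $R(S)$. With those checks, the proof reduces to the single sentence: the forward implication is Theorem \ref{thm:ELM_exist_aiff} under the closedness hypothesis, and the reverse implication is Theorem \ref{thm:optimality}.
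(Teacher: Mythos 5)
Your proof is correct and follows exactly the route the paper intends: the forward implication is the sufficiency half of Theorem \ref{thm:ELM_exist_aiff} (closedness of $R(S)$ gives the essential Lagrange multiplier at the global minimizer), and the converse is Theorem \ref{thm:optimality}, which needs only feasibility. The paper itself states the corollary as a direct combination of these two theorems with no further argument, so your bookkeeping checks are the whole content of the proof.
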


\subsection{The proper Lagrange multiplier} According to the definition of the proper Lagrange multiplier (see Definition \ref{def:plm}) and the definition of the essential Lagrange multiplier (see Definition \ref{def:elm}), the existence of the proper Lagrange multiplier always implies the existence of the essential Lagrange multiplier and 
\begin{equation}\label{eq:plm-elm}
\lambda^* = \bar{\lambda}|_{\overline{R(S)}},
\end{equation}
where $\bar{\lambda}|_{\overline{R(S)}}$ is the restriction of $\bar{\lambda}$ to $\overline{R(S)}$. In other words, if the essential Lagrange multiplier does not exist, neither does the proper Lagrange multiplier. Therefore, we can consider the theory of the proper Lagrange multiplier under the assumption that the essential Lagrange multiplier exists, and which can be verified by the results in the previous subsection. We can also assume that $\lambda^* \neq 0$. Otherwise $\bar{\lambda} = 0$ is a proper Lagrange multiplier.  We will establish the existence and uniqueness theory of the proper Lagrange multiplier under Assumption \ref{assum:e_elm}. 

\begin{Assumption}\label{assum:e_elm}
The essential Lagrange multiplier $\lambda^*$ exists at $u^*$ and $\lambda^* \neq 0$.
\end{Assumption}

\begin{Remark}
If the essential Lagrange multiplier does not exist, neither does the classical KKT system. This happens in infinite-dimensional cases as shown in the previous subsection. Therefore, it is necessary to use the asymptotic or approximate KKT system to characterize the optimality in some infinite-dimensional cases, which has been used in the literature (cf. \cite{andreani2019sequential, borgens2020new, jeyakumar2003new, steck2018lagrange, thibault1997sequential}) as a technique, but did not confirm its necessity theoretically. 
\end{Remark}

Let $\zeta^* = Su^*$ and $\mathcal{N}(\zeta^*,K)$ be the normal cone to $K$ at $\zeta^*$, i.e., 
$$\mathcal{N}(\zeta^*,K) = \{\lambda\in \mathcal{X}:\ -\langle \lambda, \zeta - \zeta^*\rangle \geq 0\quad \forall\ \zeta \in K\}.$$

\begin{theorem}\label{thm:pLM_exist}
Suppose that Assumption \ref{assum:e_elm} holds. The proper Lagrange multiplier exists at $u^*$ if and only if there exist  $\tilde{\lambda}\in \mathcal{N}(\zeta^*,K)$ and $\bar{\zeta}_0\in \overline{R(S)}$ such that
\begin{equation}\label{eq:LM_iff1}
 \mathrm{Ker}(\tilde{\lambda})\cap \overline{R(S)}=  \mathrm{Ker}(\lambda^*)\cap \overline{R(S)}\quad\quad (\mbox{Compatibility\ Condition})
 \end{equation}
 or equivalently
\begin{equation}\nonumber
\mathrm{Span}\{\tilde{\lambda}\} + \mathrm{Ker}(S^*) = \mathrm{Span}\{\lambda^*\} + \mathrm{Ker}(S^*)
 \end{equation}
and 
 \begin{equation}\label{eq:LM_iff2}
 \left\{
 \begin{aligned}
 & \langle\tilde{\lambda}, \bar{\zeta}_0 \rangle > 0,\\
 & \langle \lambda^*, \bar{\zeta}_0 \rangle > 0. 
 \end{aligned}
 \right.
 \quad\quad  (\mbox{Consistency\ Condition})
 \end{equation}
\end{theorem}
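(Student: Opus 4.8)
The plan is to work entirely within the orthogonal decomposition $\mathcal{X} = \overline{R(S)} \oplus \mathrm{Ker}(S^*)$, which is valid because $\overline{R(S)}^\perp = \mathrm{Ker}(S^*)$. Write $P$ for the orthogonal projection onto $\overline{R(S)}$. Two elementary facts will drive everything. First, since $(I-P)\lambda \in \overline{R(S)}^\perp = \mathrm{Ker}(S^*)$, one has $S^*\lambda = S^* P\lambda$ for every $\lambda\in\mathcal{X}$, so the first line of the KKT system constrains only $P\bar\lambda$; moreover, if a proper multiplier $\bar\lambda$ exists then $S^*(\bar\lambda-\lambda^*)=0$ forces $\bar\lambda-\lambda^*\in\mathrm{Ker}(S^*)$, whence $P\bar\lambda=\lambda^*$ (this recovers (\ref{eq:plm-elm})). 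Second, for $x\in\overline{R(S)}$ and any $\lambda$ one has $\langle\lambda,x\rangle=\langle P\lambda,x\rangle$, so pairings against vectors of $\overline{R(S)}$ see only the projected part.

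Next I would record the equivalence of the two forms of the compatibility condition. Both $\mathrm{Ker}(\tilde\lambda)=\{\tilde\lambda\}^\perp$ and $\overline{R(S)}=\mathrm{Ker}(S^*)^\perp$ are closed, and a sum of a closed subspace with the finite-dimensional $\mathrm{Span}\{\tilde\lambda\}$ is again closed, so the identity $(A\cap B)^\perp=\overline{A^\perp+B^\perp}$ gives
$$(\mathrm{Ker}(\tilde\lambda)\cap\overline{R(S)})^\perp = \mathrm{Span}\{\tilde\lambda\}+\mathrm{Ker}(S^*),$$
and likewise with $\lambda^*$ in place of $\tilde\lambda$. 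Since the two intersections are closed subspaces, they coincide iff their orthogonal complements coincide, which is exactly the span identity; thus (\ref{eq:LM_iff1}) and its displayed reformulation are interchangeable.

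For the forward implication, suppose a proper multiplier $\bar\lambda$ exists and set $\tilde\lambda:=\bar\lambda\in\mathcal{N}(\zeta^*,K)$. By the first fact $P\bar\lambda=\lambda^*$, so for $x\in\overline{R(S)}$ we get $\langle\tilde\lambda,x\rangle=\langle\lambda^*,x\rangle$, giving $\mathrm{Ker}(\tilde\lambda)\cap\overline{R(S)}=\mathrm{Ker}(\lambda^*)\cap\overline{R(S)}$, i.e. compatibility. Taking $\bar\zeta_0:=\lambda^*\in\overline{R(S)}$, which is nonzero by Assumption \ref{assum:e_elm}, yields $\langle\lambda^*,\bar\zeta_0\rangle=\|\lambda^*\|^2>0$ and $\langle\tilde\lambda,\bar\zeta_0\rangle=\langle\bar\lambda,\lambda^*\rangle=\langle\lambda^*,\lambda^*\rangle=\|\lambda^*\|^2>0$, which is consistency.

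For the converse, given $\tilde\lambda\in\mathcal{N}(\zeta^*,K)$ and $\bar\zeta_0\in\overline{R(S)}$ meeting both conditions, the span form of compatibility places $\lambda^*$ in $\mathrm{Span}\{\tilde\lambda\}+\mathrm{Ker}(S^*)$, so $\lambda^*=\alpha\tilde\lambda+\nu$ with $\nu\in\mathrm{Ker}(S^*)$; applying $P$ and using $P\nu=0$, $P\lambda^*=\lambda^*$ gives $\lambda^*=\alpha P\tilde\lambda$. Pairing against $\bar\zeta_0\in\overline{R(S)}$ and invoking the second fact gives $\langle\lambda^*,\bar\zeta_0\rangle=\alpha\langle\tilde\lambda,\bar\zeta_0\rangle$, and since both pairings are strictly positive by consistency, $\alpha>0$. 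I then claim $\bar\lambda:=\alpha\tilde\lambda$ is a proper multiplier: its first line holds since
$$S^*\bar\lambda=\alpha S^*\tilde\lambda=\alpha S^*P\tilde\lambda=S^*\lambda^*=-D_u\theta(u^*),$$
and its second line holds because $\mathcal{N}(\zeta^*,K)$ is a cone and $\alpha>0$, so $\alpha\tilde\lambda\in\mathcal{N}(\zeta^*,K)$. The delicate point, and the only place the strict inequalities are used, is the sign of $\alpha$: compatibility alone forces $\lambda^*$ and $P\tilde\lambda$ to be parallel, but a negative $\alpha$ would make $\alpha\tilde\lambda$ leave the normal cone. The consistency condition is precisely what pins $\lambda^*$ and $\tilde\lambda$ to the same side after projection, upgrading parallelism to a strictly positive multiple and letting $\bar\lambda=\alpha\tilde\lambda$ simultaneously restrict to $\lambda^*$ on $\overline{R(S)}$ and remain in $\mathcal{N}(\zeta^*,K)$.
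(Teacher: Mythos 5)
Your proof is correct; it constructs exactly the same multiplier as the paper does, namely $\bar{\lambda} = t_0\tilde{\lambda}$ with $t_0 = \langle \lambda^*, \bar{\zeta}_0\rangle/\langle \tilde{\lambda}, \bar{\zeta}_0\rangle > 0$, but it verifies that this works by a different mechanism. The paper runs the converse through the intersection form of (\ref{eq:LM_iff1}): it decomposes $\overline{R(S)} = \bigl(\mathrm{Ker}(\tilde{\lambda})\cap \overline{R(S)}\bigr) + \mathrm{Span}\{\bar{\zeta}_0\}$ and checks $\langle \bar{\lambda},\zeta\rangle = \langle \lambda^*,\zeta\rangle$ on an arbitrary $\zeta = \zeta_0 + s\bar{\zeta}_0$, i.e.\ it confirms the restriction identity (\ref{eq:plm-elm}) pointwise. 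You instead enter through the span form, write $\lambda^* = \alpha\tilde{\lambda} + \nu$ with $\nu\in\mathrm{Ker}(S^*)$, and apply the orthogonal projection $P$ onto $\overline{R(S)}$ to get $\lambda^* = \alpha P\tilde{\lambda}$ outright; the first KKT line then becomes the one-line adjoint computation $S^*\bar{\lambda} = \alpha S^* P \tilde{\lambda} = S^*\lambda^* = -D_u\theta(u^*)$, and the consistency condition (\ref{eq:LM_iff2}) is used in both proofs only to pin down the sign of the scalar. Your projection formalism buys a cleaner sign analysis and dispenses with the paper's case-free but slightly ad hoc decomposition of a general element of $\overline{R(S)}$; it also tightens the forward direction, where your explicit choice $\bar{\zeta}_0 = \lambda^*$ makes both pairings equal to $\|\lambda^*\|^2 > 0$ and removes the paper's sign-correction step ("otherwise, we can choose $-\zeta_0$"). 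What the paper's route buys in exchange is that the converse never needs the span-form reformulation at all, whereas your argument leans on the equivalence of the two forms of (\ref{eq:LM_iff1}) — which you do prove carefully, including the closedness of $\mathrm{Span}\{\tilde{\lambda}\} + \mathrm{Ker}(S^*)$ needed to drop the closure in $(\mathrm{Ker}(\tilde{\lambda})\cap\overline{R(S)})^\perp = \overline{\mathrm{Span}\{\tilde{\lambda}\} + \mathrm{Ker}(S^*)}$, a point the paper's own chain of equivalences passes over more quickly.
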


\begin{proof}
A direct calculation of the polar sets on both sides gives the equivalence.

If the proper Lagrange multiplier $\bar{\lambda}$ exists, we have $\bar{\lambda} \in \mathcal{N}(\zeta^*,K)$ and $\lambda^* = \bar{\lambda}|_{\overline{R(S)}}$, which follows 
$$ \mathrm{Ker}(\bar{\lambda})\cap \overline{R(S)} = \mathrm{Ker}(\lambda^*)\cap \overline{R(S)}.$$
Since $\lambda^*\neq 0$ on $\overline{R(S)}$, there exists $\zeta_0\in \overline{R(S)}$ such that
$$\langle \lambda^*, \zeta_0\rangle \neq 0.$$
We assume that
$\langle \lambda^*, \zeta_0\rangle > 0.$
Otherwise, we can choose $-\zeta_0$. Taking $\bar{\zeta}_0 = \zeta_0$, the condition (\ref{eq:LM_iff2}) holds for $\bar{\lambda}$. 
Hence, we can take $\tilde{\lambda} = \bar{\lambda}$. 

Now we prove the other direction. 
Let $\tilde{\lambda}$ be an element in $\mathcal{N}(\zeta^*,K)$ such that (\ref{eq:LM_iff1}) and (\ref{eq:LM_iff2}) hold. 

Let  
$$ \bar{\lambda} = t_0\tilde{\lambda},$$
where $t_0 = \langle \lambda^*, \bar{\zeta}_0\rangle/\langle \tilde{\lambda}, \bar{\zeta}_0\rangle$ and $\bar{\zeta}_0$ satisfies (\ref{eq:LM_iff2}). 

We will prove that $\bar{\lambda}$ is a proper Lagrange multiplier. Note that the  condition (\ref{eq:LM_iff2}) implies $t_0>0$, which further gives $\bar{\lambda}\in \mathcal{N}(\zeta^*,K)$. 

Now, by the definition of the proper Lagrange multiplier,  we only need to show that
$$\lambda^* = \bar{\lambda}|_{\overline{R(S)}}.$$

By  (\ref{eq:LM_iff2}) and (\ref{eq:LM_iff1}), we have $\overline{R(S)} = \mathrm{Ker}(\lambda^*)\cap \overline{R(S)} + \mathrm{Span}\{\bar{\zeta}_0\} = \mathrm{Ker}(\tilde{\lambda})\cap \overline{R(S)} + \mathrm{Span}\{\bar{\zeta}_0\}$. Therefore, for any $\zeta \in \overline{R(S)}$, there exist $s\in \mathbb{R}$ and $\zeta_0\in \mathrm{Ker}(\tilde{\lambda})\cap \overline{R(S)}$ such that $\zeta = \zeta_0 + s\bar{\zeta}_0$. It follows
$$\langle \bar{\lambda}, \zeta\rangle =  t_0 \langle \tilde{\lambda}, \zeta\rangle =  t_0 \langle \tilde{\lambda}, \zeta_0 + s\bar{\zeta}_0\rangle = t_0s\langle \tilde{\lambda}, \bar{\zeta}_0\rangle = s\langle \lambda^*, \bar{\zeta}_0\rangle =  \langle \lambda^*, \zeta_0 + s\bar{\zeta}_0\rangle = \langle \lambda^*, \zeta\rangle,$$
which implies $\lambda^* = \bar{\lambda}|_{\overline{R(S)}}.$
\end{proof}

\begin{theorem}\label{thm:pLM_unique}
Suppose that Assumption \ref{assum:e_elm} holds.
There exists a unique \\
proper Lagrange multiplier at $u^*$ if and only if (1) there exists $\tilde{\lambda}\in\mathcal{N}(\zeta^*,K)$ which satisfies (\ref{eq:LM_iff1}) and (\ref{eq:LM_iff2}), and (2) for any $\hat{\lambda}\in\mathcal{N}(\zeta^*,K)$ which satisfies (\ref{eq:LM_iff1}) and (\ref{eq:LM_iff2}),  it holds $\mathrm{Ker}(\hat{\lambda}) = \mathrm{Ker}(\tilde{\lambda})$.
\end{theorem}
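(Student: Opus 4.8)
The plan is to reduce the uniqueness question to a statement about the kernels of proper Lagrange multipliers, and then transport it to the candidate set via Theorem \ref{thm:pLM_exist}. First I would record the characterization already extracted in the proof of Theorem \ref{thm:pLM_exist}: an element $\bar{\lambda}\in\mathcal{X}$ is a proper Lagrange multiplier if and only if $\bar{\lambda}\in\mathcal{N}(\zeta^*,K)$ and $\bar{\lambda}|_{\overline{R(S)}}=\lambda^*$. The second condition is exactly $S^*(\bar{\lambda}-\lambda^*)=0$, i.e. $\bar{\lambda}\in\lambda^*+\mathrm{Ker}(S^*)$, so the set $\mathcal{P}$ of all proper Lagrange multipliers is the convex intersection $(\lambda^*+\mathrm{Ker}(S^*))\cap\mathcal{N}(\zeta^*,K)$, and uniqueness means $\mathcal{P}$ is a singleton.

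The key step is an injectivity lemma: the map $\bar{\lambda}\mapsto\mathrm{Ker}(\bar{\lambda})$ is injective on $\mathcal{P}$. Indeed, if $\bar{\lambda}_1,\bar{\lambda}_2\in\mathcal{P}$ have the same kernel then, being nonzero (each restricts to $\lambda^*\neq0$ on $\overline{R(S)}$ by Assumption \ref{assum:e_elm}), they are proportional, $\bar{\lambda}_1=c\,\bar{\lambda}_2$. Choosing $\zeta_0\in\overline{R(S)}$ with $\langle\lambda^*,\zeta_0\rangle\neq0$ and using $\bar{\lambda}_i|_{\overline{R(S)}}=\lambda^*$ gives $\langle\lambda^*,\zeta_0\rangle=\langle\bar{\lambda}_1,\zeta_0\rangle=c\langle\bar{\lambda}_2,\zeta_0\rangle=c\langle\lambda^*,\zeta_0\rangle$, forcing $c=1$. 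Hence distinct proper multipliers have distinct kernels, and $|\mathcal{P}|$ equals the cardinality of the kernel set $\{\mathrm{Ker}(\bar{\lambda}):\bar{\lambda}\in\mathcal{P}\}$.

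Next I would identify this kernel set with the one arising from the candidate set $\mathcal{C}=\{\hat{\lambda}\in\mathcal{N}(\zeta^*,K):\ (\ref{eq:LM_iff1})\ \text{and}\ (\ref{eq:LM_iff2})\ \text{hold}\}$. Both inclusions come directly from the proof of Theorem \ref{thm:pLM_exist}: every $\bar{\lambda}\in\mathcal{P}$ itself lies in $\mathcal{C}$ (take $\tilde{\lambda}=\bar{\lambda}$ and $\bar{\zeta}_0=\zeta_0$ as above, where $\langle\bar{\lambda},\zeta_0\rangle=\langle\lambda^*,\zeta_0\rangle>0$), while every $\hat{\lambda}\in\mathcal{C}$ produces the proper multiplier $t_0\hat{\lambda}\in\mathcal{P}$ with $t_0>0$, whence $\mathrm{Ker}(t_0\hat{\lambda})=\mathrm{Ker}(\hat{\lambda})$. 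Therefore $\{\mathrm{Ker}(\hat{\lambda}):\hat{\lambda}\in\mathcal{C}\}=\{\mathrm{Ker}(\bar{\lambda}):\bar{\lambda}\in\mathcal{P}\}$. Combining with the injectivity lemma, $\mathcal{P}$ is a singleton if and only if $\mathcal{C}\neq\emptyset$ and all elements of $\mathcal{C}$ share a single kernel, which is precisely conditions (1) and (2). This settles both directions simultaneously.

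The main obstacle is the injectivity lemma: this is where Assumption \ref{assum:e_elm} ($\lambda^*\neq0$) is indispensable, since two proportional multipliers are pinned to the same element only because their common restriction $\lambda^*$ to $\overline{R(S)}$ is nonzero. Once that rigidity is in place, the scaling map $\hat{\lambda}\mapsto t_0\hat{\lambda}$ from Theorem \ref{thm:pLM_exist} preserves kernels, and the equivalence between ``$\mathcal{P}$ a singleton'' and ``$\mathcal{C}$ has a single kernel'' is bookkeeping of the two inclusions. A minor point to verify is that two functionals with equal kernel are proportional in the Hilbert-space setting, which follows from the Riesz representation since $\mathrm{Ker}(\bar{\lambda})^\perp$ is one-dimensional for $\bar{\lambda}\neq0$.
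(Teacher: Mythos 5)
Your proof is correct and takes essentially the same route as the paper's: both reduce the question to Theorem \ref{thm:pLM_exist} by observing that every proper Lagrange multiplier is itself a candidate $\tilde{\lambda}$, that every candidate $\hat{\lambda}$ scales to a proper multiplier $t_0\hat{\lambda}$ with the same kernel, and that two proper multipliers with equal kernels must coincide because their common restriction $\lambda^*\neq 0$ pins the proportionality constant to $1$. Your explicit injectivity lemma is precisely the step the paper leaves implicit in its terse assertion that $\bar{\lambda} = \langle \lambda^*, \bar{\zeta}_0\rangle/\langle \tilde{\lambda}, \bar{\zeta}_0\rangle\,\tilde{\lambda}$ is \emph{the unique} proper Lagrange multiplier, so your write-up is a faithful (and more rigorous) rendering of the same argument.
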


\begin{proof}
Since all the proper Lagrange multipliers belong to $\mathcal{N}(\zeta^*,K)$ and their restrictions to $\overline{R(S)}\ (\neq \{0\})$ are the same, according to the proof of Theorem \ref{thm:pLM_exist}, we have $\bar{\lambda} = \langle \lambda^*, \bar{\zeta}_0\rangle/\langle \tilde{\lambda}, \bar{\zeta}_0\rangle\tilde{\lambda}$ is the unique proper Lagrange multiplier.

Let $\bar{\lambda}$ be the unique proper Lagrange multiplier. According to the condition (\ref{eq:LM_iff2}) of Theorem \ref{thm:pLM_exist}, $\bar{\lambda} \in \mathcal{N}(\zeta^*,K)$, and (\ref{eq:LM_iff1}) and (\ref{eq:LM_iff2}) hold for $\bar{\lambda}$. 

Suppose that there exists $\tilde{\lambda}\in\mathcal{N}(\zeta^*,K)$ which satisfies (\ref{eq:LM_iff1}) and (\ref{eq:LM_iff2}), it holds $\mathrm{Ker}(\tilde{\lambda}) \neq \mathrm{Ker}(\bar{\lambda})$. By Theorem \ref{thm:pLM_exist}, there exists a proper Lagrange multiplier $\bar{\lambda}_0$ with $\mathrm{Ker}(\bar{\lambda}_0)\neq \mathrm{Ker}(\bar{\lambda})$. This is a contradiction to the uniqueness of the Lagrange multiplier. 
\end{proof}

\begin{Remark}
If $\mathcal{X} = \overline{R(S)}$ or equivalently $\mathrm{Ker}(S^*) = \{0\}$, the proper Lagrange multiplier is the essential Lagrange multiplier, which implies that the proper Lagrange multiplier (if exists) is unique.  
\end{Remark}

\begin{Remark}
In the finite-dimensional case, the condition $\mathrm{Ker}(S^*) = \{0\}$ is the LICQ condition. The connections of the LICQ condition and the uniqueness of the proper Lagrange multiplier have been investigated in \cite{wachsmuth2013licq}. It has also been proved that the proper Lagrange multiplier is unique if and only if it satisfies SMFC in \cite{kyparisis1985uniqueness} for an  optimization problem with both equality and inequality constraints under the assumption that the proper Lagrange multiplier exists. The uniqueness results for the case of general cone constraints can be found in \cite{shapiro1992perturbation, shapiro1997uniqueness}.
\end{Remark}

The following example is helpful in understanding the previous theoretical results.
\begin{Example}\label{exm:Ex1}
We consider the optimization problem
\begin{equation}\nonumber
\min\frac{1}{2}[(x_1 -\alpha)^2 + x_2^2]\ \ \mbox{s.t.}\ \  Sx \in K,
\end{equation}
where $\alpha\in \mathbb{R}$, $x\in \mathbb{R}^{2}$, $S = \begin{pmatrix}1 & 0\\ 0 &0 \end{pmatrix}$ and $K\subset \mathbb{R}^2$ is a closed convex set.

\begin{enumerate}[$\bullet$]
\item Let $K = K_1$, where 
$$K_1 = \{(\zeta_1,\zeta_2)^T\in \mathbb{R}^2:\ \zeta_1^2 + (\zeta_2 -1)^2 \leq 1\}.$$
In this case $K\cap R(S) = (0,0)^T$ and the feasible set is 
$$ M = \{(0,x_2)^T\in \mathbb{R}^2:\ x_2\in \mathbb{R}\}. $$ 
The global minimizer of this problem is $x^* = (0,0)^T$. The gradient of the objective function at this point is $(-\alpha,0)^T$ and the essential Lagrange multiplier is $\lambda^* = (\alpha, 0)^T$. Note that the solution of
$$\begin{pmatrix}-\alpha \\0 \end{pmatrix} + S^*\lambda = 0$$
is $\lambda = (\alpha, \lambda_2)^T$, for any $\lambda_2\in \mathbb{R}$. Since the proper Lagrange multiplier must satisfy the above equation, we assume that $\bar{\lambda} = (\alpha, \bar{\lambda}_2)^T$ for some $\bar{\lambda}_2\in \mathbb{R}$. Now we consider the condition
$$- \langle \bar{\lambda}, \zeta - \zeta^*\rangle\geq 0\quad \forall\ \zeta\in K,$$
where $\zeta^* = Sx^* = (0,0)^T$.
It is equivalent to 
$$ -\alpha\zeta_1 - \bar{\lambda}_2 \zeta_2 \geq 0\quad \forall\ \zeta = (\zeta_1, \zeta_2)^T\in K.$$
If $\alpha = 0$, we can choose $\bar{\lambda}_2 = 0$.
If $\alpha \neq 0$, there is no $\bar{\lambda}_2\in \mathbb{R}$ to satisfy the inequality.  This means that unless $\alpha=0$, the proper Lagrange multiplier does not exist for this problem at the global minimizer. Note that if $\alpha \neq 0$, we have 
$$ \mathcal{N}(\zeta^*,K):=\{(0, \lambda_2)^T\in \mathbb{R}^2:\ \lambda_2\leq 0\} \subset \mathrm{Ker}(S^*) :=\{(0, \lambda_2)^T\in \mathbb{R}^2:\ \lambda_2\in \mathbb{R}\}.$$
Both the compatibility condition (\ref{eq:LM_iff1}) and the consistency condition (\ref{eq:LM_iff2}) are not satisfied.

\item Let $K = K_2$, where
$$
K_2 = \{(\zeta_1,\zeta_2)^T\in \mathbb{R}^2:\ \zeta_1^2 + (\zeta_2 -1)^2 \leq 1\}\setminus \{(\zeta_1,\zeta_2)^T\in \mathbb{R}^2:\ \zeta_1 - \zeta_2\leq 0\}.
$$
The feasible set $M$, the global minimizer and the essential Lagrange multiplier are the same as those in the case $K = K_1$. As before, for the proper Lagrange multiplier $\bar{\lambda} = (\alpha, \bar{\lambda}_2)^T$ we have
$$ -\alpha\zeta_1 - \bar{\lambda}_2 \zeta_2 \geq 0\quad \forall\ \zeta = (\zeta_1, \zeta_2)^T\in K.$$
On the other hand, the normal cone $\mathcal{N}(\zeta^*, K)$ is given by
$$\mathcal{N}(\zeta^*, K) = \{(\lambda_1, \lambda_2)^T\in \mathbb{R}^2:\ \lambda_2\leq 0,\ \lambda_1 + \lambda_2 \geq 0\}.$$

\begin{enumerate}[(i)]
\item If $\alpha<0$, $(\alpha, \bar{\lambda}_2)^T\not\in \mathcal{N}(\zeta^*, K)$ for any $\bar{\lambda}_2\in \mathbb{R}$. Therefore, the proper Lagrange multiplier does not exist. Note that the condition (\ref{eq:LM_iff2}) can not be satisfied, while the condition (\ref{eq:LM_iff1}) is always true for any $0\neq \lambda \in \mathcal{N}(\zeta^*, K)$.

\item If $\alpha>0$, $\bar{\lambda}$ is a proper Lagrange multiplier for any $\bar{\lambda}_2\in [-\alpha, 0]$. In this case, both the compatibility condition (\ref{eq:LM_iff1}) and the consistency condition (\ref{eq:LM_iff2}) can be fulfilled. 
\end{enumerate}

\end{enumerate}

Figure 1 gives an illustration of the results above. 

\begin{center}
\begin{tikzpicture}[scale = 0.5]
\begin{scope}[scale = 1.3]
\draw [color = black, fill=grey!20] (1.2,1.2) arc (0:360:1.2);
\draw[->, blue] (-1.5,0)--(1.8,0) node[right]{{\color{black}$\zeta_1$}};
\draw[->] (0,-0.5)--(0,3.0) node[above]{$\zeta_2$};
\node at (0,-1.0) {No $\bar{\lambda}$ for $\alpha\neq 0$};
\draw[fill=black!80] (0,1.2) circle (0.04) node[right]{$K_1$};
\draw[fill=red!80] (0,0) circle (0.06);
\node at (-2.1,0) {\color{blue}{\footnotesize{$R(S)$}}};
\end{scope}


\begin{scope}[xshift = 9cm, scale = 1.3]
\draw [color = black, fill=grey!20] (1.2,1.2) arc (0:270:1.2);
\draw [color = black] (0,0)--(1.2,1.2);
\draw[->, blue] (-1.5,0)--(1.8,0) node[right]{{\color{black}$\zeta_1$}};
\draw[->] (0,-0.5)--(0,3.0) node[above]{$\zeta_2$};
\node at (0,-1.0) {No $\bar{\lambda}$ for $\alpha<0$};
\node at (-2.1,0) {\color{blue}{\footnotesize{$R(S)$}}};
\draw[fill=black!80] (0,1.2) circle (0.04) node[right]{$K_2$};
\draw[fill=red!80] (0,0) circle (0.06);
\end{scope}

\begin{scope}[xshift = 18cm, scale = 1.3]
\draw [color = black, fill=grey!20] (1.2,1.2) arc (0:270:1.2);
\draw [color = black] (0,0)--(1.2,1.2);
\draw[->, blue] (-1.5,0)--(1.8,0) node[right]{{\color{black}$\zeta_1$}};
\draw[->] (0,-0.5)--(0,3.0) node[above]{$\zeta_2$};
\node at (0.5,-1.0) {$\bar{\lambda} = (\alpha, \bar{\lambda}_2)^T$ for $\alpha\geq 0$};
\draw[fill=black!80] (0,1.2) circle (0.04) node[right]{$K_2$};
\draw[fill=red!80] (0,0) circle (0.06);
\node at (-2.1,0) {\color{blue}{\footnotesize{$R(S)$}}};
\end{scope}
\node at (9.5,-3.5) {$\mathrm{Figure\ 1.}$};
\end{tikzpicture}
\end{center}
\end{Example}

\subsection{The convergence of multipliers of the classical ALM} The classical ALM for the model problem is given in (\ref{eq:ALM}). We will give an essential characterization of the convergence of the multipliers generated by the algorithm.

\begin{theorem}\label{thm:ALM-ELM}
Let $\{\lambda^k\}_{k=1}^{+\infty}$ be the multipliers generated in the classical ALM (\ref{eq:ALM}). 
\begin{enumerate}[(1)]
\item If the essential Lagrange multiplier $\lambda^*$ exists at $u^*$, then 
\begin{equation}\label{eq:m-c}
\lim\limits_{k\rightarrow + \infty} \langle \lambda^k, Sv\rangle = \langle \lambda^*, Sv\rangle\quad \forall\ v\in \mathcal{U}.
\end{equation}

\item If the restriction of $\{\lambda^k\}_{k=1}^{+\infty}$ to $\overline{R(S)}$ weakly converges in $\overline{R(S)}$ to some element $\lambda^*\in \overline{R(S)}$, then $\lambda^*$ is the essential Lagrange multiplier at $u^*$.
\end{enumerate}

\end{theorem}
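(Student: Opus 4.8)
The plan is to connect the multiplier iterates $\{\lambda^k\}$ produced by the classical augmented Lagrangian method to the W-AKKT system of Theorem~\ref{thm:w-akkt}, and then to the essential Lagrange multiplier via its defining relations \eqref{eq:elm}. The key structural fact I expect to exploit is that the ALM update has the standard form $\lambda^{k+1} = \lambda^k + c_k(Su^k - P_K(\cdots))$ (or the dual-ascent analogue spelled out in \eqref{eq:ALM}), so that each $\lambda^k$ lies in, or projects naturally onto, $\overline{R(S)}$, and the stationarity block of the W-AKKT system forces $\lim_k \langle \lambda^k, Sv\rangle$ to exist and equal $-\langle D_u\theta(u^*), v\rangle_{\mathcal{U}}$ for every $v\in\mathcal{U}$.

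\medskip

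\emph{Part (1).} Assuming the essential Lagrange multiplier $\lambda^*$ exists, its first relation in \eqref{eq:elm} gives $\langle D_u\theta(u^*), v\rangle_{\mathcal{U}} + \langle \lambda^*, Sv\rangle = 0$ for all $v$. I would then invoke the convergence analysis of the classical ALM (established independently of multiplier information, exactly as used to prove Theorem~\ref{thm:w-akkt}) to show that the stationarity equation of the W-AKKT system \eqref{eq:w-akkt} holds for the generated sequence, i.e. $\langle D_u\theta(u^*), v\rangle_{\mathcal{U}} + \lim_{k}\langle \lambda^k, Sv\rangle = 0$. Subtracting the two identities yields $\lim_k \langle \lambda^k, Sv\rangle = \langle \lambda^*, Sv\rangle$ for all $v\in\mathcal{U}$, which is precisely \eqref{eq:m-c}. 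The only delicate point is ensuring the \emph{limit} (not merely limit superior) of $\langle \lambda^k, Sv\rangle$ exists; I expect this to follow from the first line of \eqref{eq:w-akkt}, which is stated with an honest $\lim$, together with the fact that $S^*\lambda^k \to -D_u\theta(u^*)$ weakly in $\mathcal{U}$ as already extracted in the proof of Theorem~\ref{thm:ELM_exist_aiff}.

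\medskip

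\emph{Part (2).} Here I assume the restrictions $\lambda^k|_{\overline{R(S)}}$ converge weakly in $\overline{R(S)}$ to some $\lambda^*\in\overline{R(S)}$, and I must verify both relations of Definition~\ref{def:elm}. For the stationarity relation, weak convergence gives $\langle \lambda^*, Sv\rangle = \lim_k \langle \lambda^k, Sv\rangle$ for each $v$ (since $Sv\in R(S)\subset\overline{R(S)}$), and combining with the W-AKKT stationarity line from Theorem~\ref{thm:w-akkt} produces $\langle D_u\theta(u^*), v\rangle_{\mathcal{U}} + \langle \lambda^*, Sv\rangle = 0$. For the variational inequality over $\overline{K\cap R(S)}$, I would pass to the limit in the second W-AKKT relation: for $\zeta\in K\cap R(S)$ write $\zeta = Sw$ and use the complementarity-type inequality $-\limsup_k\langle \lambda^k, \zeta - Su^*\rangle \ge 0$, upgrading $\limsup$ to the genuine limit $-\langle\lambda^*,\zeta - Su^*\rangle\ge 0$ via weak convergence, then extend from $K\cap R(S)$ to its closure by continuity of $\zeta\mapsto\langle\lambda^*,\zeta\rangle$.

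\medskip

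\textbf{The main obstacle} I anticipate is the interchange of limits and inner products in the inequality block of Part~(2): weak convergence of $\lambda^k|_{\overline{R(S)}}$ controls $\langle\lambda^k,\zeta\rangle$ only for fixed $\zeta$, and the W-AKKT condition involves a $\limsup$, so I must argue carefully that the lim and limsup coincide on the relevant test elements and that this suffices to recover the essential-multiplier inequality on all of $\overline{K\cap R(S)}$. I expect this to be handled by restricting attention to $\zeta\in R(S)$ (where weak convergence of the restricted sequence applies directly) and then invoking density of $K\cap R(S)$ in its closure together with the boundedness of $\lambda^*$, mirroring the density argument already used at the end of the proof of Theorem~\ref{thm:ELM_exist_iff}.
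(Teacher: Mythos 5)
Your proposal is correct and follows essentially the same route as the paper: both parts rest on the fact that the ALM-generated sequence $\{\lambda^k\}$ satisfies the W-AKKT system (\ref{eq:w-akkt}), with Part (1) obtained by subtracting the stationarity line of (\ref{eq:w-akkt}) from the first relation in Definition \ref{def:elm}, and Part (2) by upgrading the $\limsup$ inequality to a genuine limit via weak convergence of the restrictions on test elements $Sv\in K$, then passing to $\overline{K\cap R(S)}$ by density and boundedness of $\lambda^*$. The incidental remark in your plan that each $\lambda^k$ lies in $\overline{R(S)}$ is not actually true in general (the ALM update need not keep $\lambda^k$ in $\overline{R(S)}$), but your argument never uses it, so this is harmless.
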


\begin{proof}
Note that  $\{\lambda^k\}_{k=1}^{+\infty}$ satisfies (\ref{eq:w-akkt}).  The first part follows from (\ref{eq:w-akkt}) and Definition \ref{def:elm} directly.

For the second part, if the restriction of $\{\lambda^k\}_{k=1}^{+\infty}$ to $\overline{R(S)}$ weakly converges in $\overline{R(S)}$ to some element $\lambda^*\in \overline{R(S)}$, i.e., 
$$\lim\limits_{k\rightarrow +\infty}\langle \lambda^k, Sv\rangle =  \langle \lambda^*, Sv\rangle\quad \forall\ v\in \mathcal{U},$$
then we have
$$ \langle D_u\theta(u^*), v\rangle_\mathcal{U} +   \langle \lambda^*, Sv\rangle = \langle D_u\theta(u^*), v\rangle_\mathcal{U}  + \lim\limits_{k\rightarrow +\infty}\langle \lambda^{k}, Sv\rangle = 0\quad \forall\ v\in \mathcal{U}$$
and
$$ -\langle \lambda^*, Sv - Su^*\rangle = -\lim\limits_{k\rightarrow +\infty} \langle \lambda^k, Sv - Su^*\rangle\geq -\uplim\limits_{k\rightarrow +\infty} \langle \lambda^k, Sv - Su^*\rangle\geq 0\quad \forall\ Sv\in K,$$
by (\ref{eq:w-akkt}). Therefore, $\lambda^*$ is the essential Lagrange multiplier by the boundedness of $\lambda^*$ and we finish the proof.

\end{proof}

The following corollary follows from Theorem \ref{thm:ELM_exist_aiff} and Theorem \ref{thm:ALM-ELM} directly.
\begin{Corollary}
If $R(S)$ is closed in $\mathcal{X}$, then the restriction of $\{\lambda^k\}_{k=1}^{+\infty}$ to $R(S)$ always weakly converges in $R(S)$ to the essential Lagrange multiplier $\lambda^*$ at $u^*$.
\end{Corollary}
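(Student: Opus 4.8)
The plan is to chain the two cited results and then translate the resulting pointwise convergence of inner products into the language of weak convergence in the Hilbert space $R(S)$. First I would invoke Theorem \ref{thm:ELM_exist_aiff}: since $R(S)$ is assumed closed in $\mathcal{X}$, the essential Lagrange multiplier $\lambda^*$ exists at $u^*$ (and is unique by Theorem \ref{thm:ELM_unique}), with $\lambda^*\in \overline{R(S)} = R(S)$. This first step is what makes the target object $\lambda^*$ available at all.

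Second, with the existence of $\lambda^*$ in hand, I would apply part (1) of Theorem \ref{thm:ALM-ELM} to the multipliers $\{\lambda^k\}_{k=1}^{+\infty}$ generated by the classical ALM, which yields
\begin{equation}\nonumber
\lim\limits_{k\rightarrow +\infty}\langle \lambda^k, Sv\rangle = \langle \lambda^*, Sv\rangle\quad \forall\ v\in \mathcal{U}.
\end{equation}
Note that this uses only the hypothesis of part (1), namely that $\lambda^*$ exists, so no further assumptions are required beyond the closedness already invoked.

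Third, I would observe that this limit is exactly the assertion that the restrictions converge weakly in $R(S)$. For each $v\in\mathcal{U}$ we have $Sv\in R(S)$, so $\langle \lambda^k, Sv\rangle$ equals the pairing of the restriction of $\lambda^k$ to $R(S)$ (i.e. the orthogonal projection onto the closed subspace $R(S)$) with $Sv$, and likewise for $\lambda^*\in R(S)$. As $v$ ranges over $\mathcal{U}$, the elements $Sv$ exhaust all of $R(S)$, and because $R(S)$ is a Hilbert space its continuous dual is identified via Riesz representation with $R(S)$ itself. Hence the displayed limit says precisely that the restriction of $\{\lambda^k\}$ to $R(S)$ converges weakly in $R(S)$ to $\lambda^*$, which is the desired conclusion.

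The argument involves no genuine obstacle; the only point requiring care is the identification of ``weak convergence in $R(S)$'' with convergence of the pairings $\langle\,\cdot\,, Sv\rangle$. This rests on two facts available once $R(S)$ is closed: that $\overline{R(S)} = R(S)$, so that $\lambda^*$ indeed lies in $R(S)$ and the restriction is well defined, and that the test set $\{Sv : v\in\mathcal{U}\}$ coincides with all of $R(S)$, so no density or limiting step beyond the Riesz identification of the dual of $R(S)$ is needed.
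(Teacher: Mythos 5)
Your proof is correct and follows essentially the same route as the paper, which simply notes that the corollary follows directly from Theorem \ref{thm:ELM_exist_aiff} (closedness of $R(S)$ gives existence of $\lambda^*$) and part (1) of Theorem \ref{thm:ALM-ELM} (existence gives $\lim_{k\rightarrow+\infty}\langle\lambda^k,Sv\rangle=\langle\lambda^*,Sv\rangle$ for all $v\in\mathcal{U}$). Your additional third step—identifying the restriction with the orthogonal projection onto the closed subspace $R(S)$ and using that $\{Sv:\ v\in\mathcal{U}\}$ is all of $R(S)$ together with the Riesz identification of its dual—is exactly the routine verification the paper leaves implicit.
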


\begin{Example}\label{Exm:ALM}
Let us consider the optimization problem
\begin{equation}\nonumber
\min\frac{1}{2}x^2\quad \mbox{s.t.}\ 
\left\{ 
\begin{aligned}
x & = 1;\\
2x & = 2.
\end{aligned}
\right.
\end{equation}
The global minimizer of this problem is $x^* = 1$ and the essential Lagrange multiplier at $x^*$ is 
$$\lambda^* = -\frac{1}{5}\begin{pmatrix}1\\ 2 \end{pmatrix}.$$
All the proper Lagrange multipliers of this example define the set
$$\Lambda = \{\bar{\lambda} = (\lambda_1, \lambda_2)^T\in \mathbb{R}^2:\ \lambda_1 + 2\lambda_2 + 1 = 0\}.$$

The iterators of the classical ALM satisfy 
\begin{equation}\nonumber
\left\{
\begin{aligned}
x^{k+1} - 1 & =  \frac{1}{1+5\beta}(x^{k} - 1);\\
\lambda^{k+1} + \frac{1}{5}\begin{pmatrix}1\\ 2 \end{pmatrix}
& = J\left[\lambda^k + \frac{1}{5}\begin{pmatrix}1\\ 2 \end{pmatrix}\right];\\
\lambda^{k+1}_1 + 2\lambda^{k+1}_2 +1 &= \frac{1}{1+5\beta}(\lambda^{k} + 2\lambda^{k} + 1),
\end{aligned}
\right.
\end{equation}
where $k = 1, 2, \dots$,
$$ J = \frac{1}{1+5\beta}\begin{pmatrix}1+4\beta &  -2\beta\\ -2\beta & 1 + \beta \end{pmatrix}$$
and $\beta > 0$.

Note that $J$ is symmetric, positive definite and $\det J = 1$, which implies $\rho(J) > 1$. We have the following convergence results.
\begin{itemize}
\item $\{x^k\}_{k=1}^{+\infty}$ converges to $x^* = 1$.
\par\vspace{4pt}

\item $\{\lambda^k\}_{k=1}^{+\infty}$ is unbounded.
\par\vspace{4pt}

\item $\{\lambda^k\}_{k=1}^{+\infty}$ converges to the essential Lagrange multiplier in $R(S)$, i.e., 
$$\lambda^k_1 + 2\lambda^k_2 \rightarrow -1 = -\frac{1}{5}(1\times1 + 2\times2)\ \ \mbox{as}\ \ k\rightarrow + \infty.$$

\item The distance between $\lambda^k\ (k=1, 2, \dots)$ and $\Lambda$ converges to $0$.

\end{itemize}

\end{Example}

\section{Revisiting the weak form asymptotic KKT system}\label{sec:w-akkt}
\setcounter{equation}{0}
In the previous arguments we notice that in the infinite-dimensional case, the essential Lagrange multiplier may not exist. Here we derive a sufficient condition to guarantee the existence of the essential Lagrange multiplier by the W-AKKT system (\ref{eq:w-akkt}). We will also use the W-AKKT system (\ref{eq:w-akkt}) to give a proof of the existence of the proper Lagrange multiplier under a generalized condition of Robinson's condition  (cf. \cite[p. 5]{ito2008lagrange}, \cite{maurer1979first, robinson1976stability, zowe1979regularity} or (\ref{eq:RCQ})).  It is worth noting that all these results also give the convergence results of the multipliers generated by the classical ALM for the model problem. 

\begin{theorem}\label{thm:Rcl}
If there exists  a norm  $\|\cdot\|_*$ on $R(S)$ such that $(R(S), \|\cdot\|_*)$ is a Banach space which is continuously embedded into $(\mathcal{X}, \|\cdot\|)$, then there exists a unique essential Lagrange multiplier $\tilde{\lambda}^*\in (R(S), \|\cdot\|_*)'$ which is the dual space of $(R(S), \|\cdot\|_*)$ such that the following KKT system holds, i.e.,
\begin{equation}\label{eq:eKKT_wB}
\left\{
\begin{aligned}
& \langle D_u\theta(u^*), v\rangle_\mathcal{U}  + \langle \tilde{\lambda}^*, Sv\rangle_{*} = 0\quad \forall\ v\in \mathcal{U};\\
& - \langle\tilde{\lambda}^* ,\zeta - Su^*\rangle_*\geq 0\quad \forall\ \zeta\in \overline{K\cap R(S)}^{\|\cdot\|_*}, \\
\end{aligned}
\right.
\end{equation}
where $\langle\cdot,\cdot\rangle_*$ is the duality pair of $(R(S), \|\cdot\|_*)'$ and $(R(S), \|\cdot\|_*)$. 
\end{theorem}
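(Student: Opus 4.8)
The plan is to construct the functional $\tilde{\lambda}^*$ directly from the W-AKKT system of Theorem \ref{thm:w-akkt} and then upgrade its boundedness using the stronger topology $\|\cdot\|_*$. First I would invoke Theorem \ref{thm:w-akkt} to obtain a sequence $\{\lambda^k\}_{k=1}^{+\infty} \subset \mathcal{X}$ satisfying (\ref{eq:w-akkt}). For each $\zeta = Sv \in R(S)$ the first line of (\ref{eq:w-akkt}) shows that $\lim_{k\to+\infty}\langle \lambda^k, \zeta\rangle = -\langle D_u\theta(u^*), v\rangle_\mathcal{U}$ exists, so the prescription $\tilde{\lambda}(\zeta) := \lim_{k\to+\infty}\langle\lambda^k, \zeta\rangle$ defines a linear functional on $R(S)$ with the factorization
$$\tilde{\lambda}(Sv) = -\langle D_u\theta(u^*), v\rangle_\mathcal{U} \quad \forall\ v \in \mathcal{U}.$$
Taking $v \in \mathrm{Ker}(S)$ gives $\langle D_u\theta(u^*), v\rangle_\mathcal{U} = 0$, so $D_u\theta(u^*) \in \mathrm{Ker}(S)^\perp$; this is what makes the value $\tilde{\lambda}(\zeta)$ independent of the representative $v$ and hence consistent.

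The crux is to show $\tilde{\lambda}$ is bounded on $(R(S), \|\cdot\|_*)$, and this is where the strengthened norm enters. I would first argue that $S$, viewed as a map $\mathcal{U} \to (R(S), \|\cdot\|_*)$, has a closed graph: if $v_n \to v$ in $\mathcal{U}$ and $Sv_n \to w$ in $\|\cdot\|_*$, then continuity of the embedding forces $Sv_n \to w$ in $\|\cdot\|$, while boundedness of $S$ on $\mathcal{X}$ forces $Sv_n \to Sv$, so $w = Sv$. Since both spaces are complete, the closed graph theorem yields a constant with $\|Sv\|_* \leq C\|v\|_\mathcal{U}$. The restriction of $S$ to $\mathrm{Ker}(S)^\perp$ is then a continuous bijection onto $(R(S), \|\cdot\|_*)$ between Banach spaces, so by the open mapping (Banach inverse) theorem its inverse is bounded, giving $\|v\|_\mathcal{U} \leq C'\|Sv\|_*$ for every $v \in \mathrm{Ker}(S)^\perp$. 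Writing any $\zeta \in R(S)$ as $\zeta = Sv$ with $v \in \mathrm{Ker}(S)^\perp$, the factorization gives
$$|\tilde{\lambda}(\zeta)| = |\langle D_u\theta(u^*), v\rangle_\mathcal{U}| \leq \|D_u\theta(u^*)\|_\mathcal{U}\,\|v\|_\mathcal{U} \leq C'\,\|D_u\theta(u^*)\|_\mathcal{U}\,\|\zeta\|_*,$$
so $\tilde{\lambda}^* := \tilde{\lambda} \in (R(S), \|\cdot\|_*)'$.

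It then remains to verify (\ref{eq:eKKT_wB}) and uniqueness, which are comparatively routine. The first line of (\ref{eq:eKKT_wB}) is precisely the factorization above. For the second line, note that $Su^* \in K \cap R(S)$ and that, exactly as in the discussion preceding the theorem, the second line of (\ref{eq:w-akkt}) gives $\tilde{\lambda}(\zeta) \leq \tilde{\lambda}(Su^*)$, i.e. $-\langle\tilde{\lambda}^*, \zeta - Su^*\rangle_* \geq 0$, for all $\zeta \in K \cap R(S)$; since $\tilde{\lambda}^*$ is $\|\cdot\|_*$-continuous this inequality passes to the closure $\overline{K\cap R(S)}^{\|\cdot\|_*}$. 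Uniqueness follows because any two functionals in $(R(S),\|\cdot\|_*)'$ satisfying the first line of (\ref{eq:eKKT_wB}) agree on the range $\{Sv : v\in\mathcal{U}\} = R(S)$, which is the whole space. I expect the main obstacle to be the boundedness step: the whole point is that the Hilbert norm $\|\cdot\|$ need not control $\tilde{\lambda}$ (so the essential Lagrange multiplier may fail to exist in $\mathcal{X}$), and the argument must exploit completeness of $(R(S),\|\cdot\|_*)$ through the closed graph and open mapping theorems rather than any explicit estimate.
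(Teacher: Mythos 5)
Your proof is correct, but it takes a genuinely different route from the paper at the key step. The paper treats the ALM multipliers $\{\lambda^k\}$ themselves as elements of $(R(S),\|\cdot\|_*)'$ (via the continuous embedding), observes that the first line of the W-AKKT system gives pointwise boundedness of $\langle \lambda^k,\zeta\rangle$ on $R(S)$, invokes the uniform boundedness principle to bound $\{\lambda^k\}$ in $(R(S),\|\cdot\|_*)'$, and then passes to a weak-* limit to obtain $\tilde{\lambda}^*$ — the same pattern it reuses in Theorems \ref{thm:R_KKT_o} and \ref{thm:R_KKT}. You instead build $\tilde{\lambda}^*$ explicitly from the factorization $\tilde{\lambda}(Sv)=-\langle D_u\theta(u^*),v\rangle_\mathcal{U}$ (well defined since $D_u\theta(u^*)\perp\mathrm{Ker}(S)$) and prove boundedness structurally: the closed graph theorem shows $S\colon\mathcal{U}\to(R(S),\|\cdot\|_*)$ is bounded, and the Banach inverse theorem applied to the bijection $S|_{\mathrm{Ker}(S)^\perp}$ gives $\|v\|_\mathcal{U}\leq C'\|Sv\|_*$, hence the estimate $\|\tilde{\lambda}^*\|_{(R(S),\|\cdot\|_*)'}\leq C'\|D_u\theta(u^*)\|_\mathcal{U}$. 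Your route buys a quantitative norm bound and makes transparent that completeness of $(R(S),\|\cdot\|_*)$ is the real engine (used twice, in the closed graph and inverse mapping steps), with the W-AKKT sequence needed only for the sign condition in the second line of (\ref{eq:eKKT_wB}); the paper's route is shorter, avoids any structure theory of $S$, and extends uniformly to the cone setting of Section \ref{sec:w-akkt} where no such factorization through $D_u\theta(u^*)$ is available. Your handling of the remaining points — passing the inequality to the $\|\cdot\|_*$-closure by continuity, and uniqueness because the first line pins $\tilde{\lambda}^*$ down on all of $R(S)$ — matches the paper's.
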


\begin{proof}
Since $(R(S), \|\cdot\|_*)$ is continuously embedded into $(\mathcal{X}, \|\cdot\|)$ and $\{\lambda^k\}_{k=1}^{+\infty}\subseteq \mathcal{X}$,  $\{\lambda^k\}_{k=1}^{+\infty}\subseteq (R(S), \|\cdot\|_*)'$. By the first condition in the W-AKKT system (\ref{eq:w-akkt}), we know
$$ \sup\{|\langle \lambda^k, \zeta\rangle_{*}|:\ k=1,\dots, +\infty\}< +\infty\quad \forall\ \zeta \in R(S).$$
According to the uniform boundedness principle (cf. \cite{sokal2011really}), we know that $\{\lambda^k\}_{k=1}^{+\infty}$ is bounded in $(R(S), \|\cdot\|_*)'$. Note that the unit ball in $(R(S), \|\cdot\|_*)'$ is weak-* compact. Again, by the first condition in the W-AKKT system (\ref{eq:w-akkt}), we know that there exists $\tilde{\lambda}^*\in (R(S), \|\cdot\|_*)'$ such that (\ref{eq:eKKT_wB}) holds. The uniqueness follows from (\ref{eq:eKKT_wB}) directly.
\end{proof}

The existence of the proper Lagrange multiplier can be guaranteed by Robinson's condition (cf. \cite[p. 5]{ito2008lagrange} and \cite{maurer1979first, robinson1976stability, zowe1979regularity}), which is
\begin{equation}\label{eq:RCQ}
\mathcal{X} =  S(\mathcal{U}) - \mathcal{K}_{K,\zeta^*}, 
\end{equation}
where 
$$\mathcal{K}_{K,\zeta^*} = \{t(\zeta - \zeta^*):\ \forall\ \zeta \in K\ \ \mbox{and}\ \ \forall\ t\geq 0\}. $$

Now we give an elementary proof of the existence of proper Lagrange multipliers under a general condition (which is a generalization of the condition (\ref{eq:RCQ})) based on the W-AKKT system (\ref{eq:w-akkt}) and the uniform boundedness principle that has an elementary proof in \cite{sokal2011really}.

\begin{lemma}\label{lem:wakkt-eq}
The W-AKKT system (\ref{eq:w-akkt}) is equivalent to
\begin{equation}\label{eq:LMweak_KKT}
\langle D_u\theta(u^*), v\rangle_\mathcal{U}  + \uplim\limits_{k\rightarrow +\infty}\langle \lambda^{k}, Sv + t(\zeta - Su^*)\rangle \leq 0\quad \forall\ v\in \mathcal{U},\ \forall\ \zeta\in K,\ \ t\geq 0.
\end{equation}
\end{lemma}
\begin{proof}
It is obvious that the W-AKKT system (\ref{eq:w-akkt}) implies (\ref{eq:LMweak_KKT}). 

Now we prove that if (\ref{eq:LMweak_KKT}) holds, so does the W-AKKT system (\ref{eq:w-akkt}).
Taking $t=0$, we have
$$ \langle D_u\theta(u^*), v\rangle_\mathcal{U} + \uplim\limits_{k\rightarrow +\infty}\langle \lambda^{k}, Sv \rangle \leq 0\quad \forall\ v\in \mathcal{U}.$$
If we change $v$ to $-v$, we have
$$ \langle D_u\theta(u^*), -v\rangle_\mathcal{U} - \lowlim\limits_{k\rightarrow +\infty}\langle \lambda^{k}, Sv \rangle = \langle D_u\theta(u^*), -v\rangle_\mathcal{U} + \uplim\limits_{k\rightarrow +\infty}\langle \lambda^{k}, S(-v) \rangle \leq 0.$$
These two inequalities give
$$ \uplim\limits_{k\rightarrow +\infty}\langle \lambda^{k}, Sv\rangle -  \lowlim\limits_{k\rightarrow +\infty}\langle \lambda^{k}, Sv \rangle \leq 0.$$
On the other hand, by the definitions of $\uplim\limits_{k\rightarrow +\infty}$ and $\lowlim\limits_{k\rightarrow +\infty}$, we have
$$ \uplim\limits_{k\rightarrow +\infty}\langle \lambda^{k}, Sv\rangle \geq  \lowlim\limits_{k\rightarrow +\infty}\langle \lambda^{k}, Sv \rangle.$$
Therefore,
$$ \uplim\limits_{k\rightarrow +\infty}\langle \lambda^{k}, Sv\rangle = \lowlim\limits_{k\rightarrow +\infty}\langle \lambda^{k}, Sv \rangle$$
and
$$\langle D_u\theta(u^*), v\rangle_\mathcal{U} + \lim\limits_{k\rightarrow +\infty}\langle \lambda^{k}, Sv \rangle = 0\quad \forall\ v\in \mathcal{U}.$$

By taking $v = 0$ and $t=1$, we have
$$-\uplim\limits_{k\rightarrow +\infty}\langle \lambda^{k}, \zeta - Su^*\rangle \geq 0\quad \forall\ \zeta\in K.$$
This completes the proof.
\end{proof}

Let $\zeta^* = Su^*$ and 
$$ \mathcal{C}(S, \zeta^*, K) = \{Sv + t(\zeta - \zeta^*)\in \mathcal{X}:\ \forall\ v\in \mathcal{U},\ \forall\ \zeta\in K\ \ \mbox{and}\ \ t\geq 0\}.$$
Note that $\mathcal{C}(S, \zeta^*, K)$ is a convex cone in $\mathcal{X}$.

\begin{theorem}\label{thm:R_KKT}
Assume that there exists a norm $\|\cdot\|_*$ such that $(\mathcal{C}(S, \zeta^*, K), \|\cdot\|_*)$ is a Banach space and it is continuously embedded into $(\mathcal{X}, \|\cdot\|)$. Then there exists $\tilde{\lambda}\in (\mathcal{C}(S, \zeta^*, K), \|\cdot\|_*)'$ such that
\begin{equation}\label{eq:R_plm}
\langle D_u\theta(u^*), v\rangle_\mathcal{U}  + \langle \tilde{\lambda}, Sv + t(\zeta - \zeta^*)\rangle_* \leq 0\quad \forall\ v\in \mathcal{U},\ \forall\ \zeta\in K\ \ \mbox{and}\ \ t\geq 0,
\end{equation}
 or equivalently,
\begin{equation}\label{eq:R_KKT}
\left\{
\begin{aligned}
&\langle D_u\theta(u^*), v\rangle_\mathcal{U}  + \langle \tilde{\lambda}, Sv\rangle_* = 0\quad \forall\ v\in \mathcal{U};\\
& -\langle \tilde{\lambda}, \zeta - Su^*\rangle_* \geq 0\quad \forall\ \zeta\in K,
\end{aligned}
\right.
\end{equation}
where $\langle\cdot, \cdot\rangle_*$ is the duality pair of $(\mathcal{C}(S, \zeta^*, K), \|\cdot\|_*)'$ and $(\mathcal{C}(S, \zeta^*, K), \|\cdot\|_*)$. 
\end{theorem}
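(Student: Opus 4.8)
The plan is to reuse the weak form asymptotic KKT system together with the uniform boundedness principle, following the template of the proof of Theorem \ref{thm:R_KKT_o} but with Robinson's condition and the generalized open mapping theorem replaced by the hypothesis that the convex cone $\mathcal{C}(S,\zeta^*,K)$ is itself a Banach space continuously embedded in $(\mathcal{X},\|\cdot\|)$. First I would invoke Theorem \ref{thm:w-akkt} to obtain a sequence $\{\lambda^k\}_{k=1}^{+\infty}\subset\mathcal{X}$ satisfying (\ref{eq:w-akkt}), and then rewrite it by Lemma \ref{lem:wakkt-eq} in the form (\ref{eq:LMweak_KKT}),
$$\langle D_u\theta(u^*),v\rangle_\mathcal{U}+\uplim_{k\to+\infty}\langle\lambda^k, Sv+t(\zeta-\zeta^*)\rangle\leq 0\quad\forall\ v\in\mathcal{U},\ \forall\ \zeta\in K,\ t\geq 0.$$
Since the embedding is continuous there is $C>0$ with $\|w\|\leq C\|w\|_*$ for $w\in\mathcal{C}(S,\zeta^*,K)$, so each $\lambda^k$ restricts to a bounded linear functional on $(\mathcal{C}(S,\zeta^*,K),\|\cdot\|_*)$ whose action agrees with $\langle\lambda^k,\cdot\rangle$; hence $\{\lambda^k\}\subset(\mathcal{C}(S,\zeta^*,K),\|\cdot\|_*)'$.

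The heart of the argument is a pointwise boundedness estimate. I would first record that, since $(\mathcal{C}(S,\zeta^*,K),\|\cdot\|_*)$ is a Banach space, the convex cone $\mathcal{C}(S,\zeta^*,K)$ is in fact a linear subspace of $\mathcal{X}$; in particular $w\in\mathcal{C}(S,\zeta^*,K)$ implies $-w\in\mathcal{C}(S,\zeta^*,K)$. Now fix $w\in\mathcal{C}(S,\zeta^*,K)$ and write $w=Sv_1+t_1(\zeta_1-\zeta^*)$. Evaluating (\ref{eq:LMweak_KKT}) at $(v_1,t_1,\zeta_1)$ gives $\uplim_{k\to+\infty}\langle\lambda^k,w\rangle\leq-\langle D_u\theta(u^*),v_1\rangle_\mathcal{U}<+\infty$, so the scalars $\langle\lambda^k,w\rangle$ are bounded above; applying the same estimate to $-w$ bounds them below. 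Hence $\sup_k|\langle\lambda^k,w\rangle|<+\infty$ for every $w\in\mathcal{C}(S,\zeta^*,K)$, and the uniform boundedness principle yields $\sup_k\|\lambda^k\|_{(\mathcal{C}(S,\zeta^*,K),\|\cdot\|_*)'}<+\infty$.

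With uniform boundedness in hand, I would extract, by weak-$*$ compactness of balls in the dual of a Banach space (as in the proof of Theorem \ref{thm:R_KKT_o}), a subsequence $\{\lambda^{k_j}\}$ weak-$*$ converging to some $\tilde\lambda\in(\mathcal{C}(S,\zeta^*,K),\|\cdot\|_*)'$. Passing to the limit along this subsequence in (\ref{eq:LMweak_KKT}), and using
$$\langle\tilde\lambda,Sv+t(\zeta-\zeta^*)\rangle_*=\lim_{j\to+\infty}\langle\lambda^{k_j},Sv+t(\zeta-\zeta^*)\rangle\leq\uplim_{k\to+\infty}\langle\lambda^k,Sv+t(\zeta-\zeta^*)\rangle,$$
produces (\ref{eq:R_plm}). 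Finally the equivalence of (\ref{eq:R_plm}) and (\ref{eq:R_KKT}) is a deterministic repetition of the computation in the proof of Lemma \ref{lem:wakkt-eq}: taking $t=0$ and then replacing $v$ by $-v$ forces the first identity of (\ref{eq:R_KKT}), while taking $v=0$ and $t=1$ yields the variational inequality.

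I expect the main obstacle to be the boundedness step, and more precisely the observation that the Banach-space hypothesis forces $\mathcal{C}(S,\zeta^*,K)$ to be symmetric, hence a subspace: this is exactly what upgrades the one-sided bounds coming from (\ref{eq:LMweak_KKT}) into the two-sided bounds required by the uniform boundedness principle, and it is the feature that lets us dispense with Robinson-type surjectivity assumptions. A secondary technical point is the extraction of a weak-$*$ convergent subsequence, which I would handle exactly as in the proof of Theorem \ref{thm:R_KKT_o}.
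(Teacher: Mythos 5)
Your proposal is correct and follows essentially the same route as the paper's own proof: both pass from the W-AKKT system through Lemma \ref{lem:wakkt-eq} to (\ref{eq:LMweak_KKT}), exploit the fact that the Banach-space hypothesis makes $\mathcal{C}(S,\zeta^*,K)$ a linear space (so that $-w$ is again admissible and the one-sided bounds become two-sided), apply the uniform boundedness principle, extract a weak-$*$ convergent subsequence, and pass to the limit, finally obtaining (\ref{eq:R_KKT}) from (\ref{eq:R_plm}) by the choices $t=0$, $v\mapsto -v$ and $v=0$, $t=1$. The only difference is cosmetic: you state explicitly the observation, left implicit in the paper's phrase ``Since $\mathcal{C}(S,\zeta^*,K)$ is a linear space,'' that linearity is forced by the assumption that $(\mathcal{C}(S,\zeta^*,K),\|\cdot\|_*)$ is a Banach space.
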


\begin{proof}
  Let $\{\lambda^k\}_{k=1}^{+\infty}$ be the same as that in the W-AKKT system (\ref{eq:w-akkt}). Since $(\mathcal{C}(S, \zeta^*, K), \|\cdot\|_*)$ is a Banach space and it is continuously embedded into $(\mathcal{X}, \|\cdot\|)$, $\{\lambda^k\}\subset (\mathcal{C}(S, \zeta^*, K), \|\cdot\|_*)'$.

Since $\mathcal{C}(S, \zeta^*, K)$ is a linear space, for any $v\in \mathcal{U}$, $\zeta \in K$ and $t\geq 0$, we have 
$$-[Sv + t(\zeta - \zeta^*)]\in  \mathcal{C}(S, \zeta^*, K),$$
i.e., there exist $w\in \mathcal{U}$, $\bar{\zeta}\in K$ and $s\geq 0$ such that
$$-[Sv + t(\zeta - \zeta^*)] = Sw + s(\zeta - \zeta^*).$$
Therefore, by (\ref{eq:LMweak_KKT}), we have
$$ \langle D_u\theta(u^*), v\rangle_\mathcal{U}  + \uplim\limits_{k\rightarrow +\infty}\langle \lambda^{k}, Sv + t(\zeta - \zeta^*)\rangle \leq 0 $$
and
\begin{equation}\nonumber
\begin{aligned}
&\quad\  \langle D_u\theta(u^*), w\rangle_\mathcal{U}  - \uplim\limits_{k\rightarrow +\infty}\langle \lambda^{k}, Sv + t(\zeta - \zeta^*)\rangle\\
&\leq \langle D_u\theta(u^*), w\rangle_\mathcal{U}  - \lowlim\limits_{k\rightarrow +\infty}\langle \lambda^{k}, Sv + t(\zeta - \zeta^*)\rangle\\
& = \langle D_u\theta(u^*), w\rangle_\mathcal{U}  + \uplim\limits_{k\rightarrow +\infty}\langle \lambda^{k}, Sw + s(\bar{\zeta} - \zeta^*)\rangle\\
& \leq 0,
\end{aligned}
\end{equation}
which follows
$$ \langle D_u\theta(u^*), w\rangle_\mathcal{U} \leq \uplim\limits_{k\rightarrow +\infty}\langle \lambda^{k}, Sv + t(\zeta - \zeta^*)\rangle \leq -\langle D_u\theta(u^*), v\rangle_\mathcal{U}.$$
Furthermore, we have 
$$ |\uplim\limits_{k\rightarrow +\infty}\langle \lambda^{k}, Sv + t(\zeta - \zeta^*)\rangle| \leq \|D_u\theta(u^*)\|_{\mathcal{U}}(\|v\|_{\mathcal{U}} + \|w\|_{\mathcal{U}}) < + \infty\quad \forall\ v\in \mathcal{U},\ \forall \zeta \in K.$$
It follows that there exists a subsequence of $\{\lambda^k\}_{k=1}^{+\infty}$ (which will still be denoted by  $\{\lambda^k\}_{k=1}^{+\infty}$), such that
$$|\langle \lambda^{k}, \eta \rangle_*| = |\langle \lambda^{k}, \eta \rangle|< +\infty\quad \forall\ \eta \in \mathcal{C}(S, \zeta^*, K),\ \forall\ k = 1, 2, \dots.$$
According to the uniform boundedness principle,  $\{\lambda^k\}_{k=1}^{+\infty}$ is bounded in\\
 $(\mathcal{C}(S, \zeta^*, K), \|\cdot\|_*)'$. Then there exists a subsequence of $\{\lambda^k\}_{k=1}^{+\infty}$ (which will still be denoted by  $\{\lambda^k\}_{k=1}^{+\infty}$) converges to $\tilde{\lambda} \in (\mathcal{C}(S, \zeta^*, K), \|\cdot\|_*)'$ in the weak-* topology of $(\mathcal{C}(S, \zeta^*, K), \|\cdot\|_*)'$. Therefore,
$$ \lim\limits_{k\rightarrow +\infty}\langle \lambda^k, \eta\rangle = \langle \tilde{\lambda}, \eta\rangle_*\quad \forall\ \eta \in \mathcal{C}(S, \zeta^*, K).$$

By (\ref{eq:LMweak_KKT}), we have (\ref{eq:R_plm}) holds. By taking $t = 0$ and $v = 0$, we can get the equivalence of (\ref{eq:R_plm}) and (\ref{eq:R_KKT}). This completes the proof.
\end{proof}

\begin{Remark}
Note that $\mathcal{X} = \mathcal{C}(S, \zeta^*, K)$, which means Robinson’s condition (\ref{eq:RCQ}) holds, is a special case of Theorem \ref{thm:R_KKT}.
\end{Remark}

\begin{Remark}
If there exists $\lambda_0$ such that 
$$\langle \lambda_0, Sv + t(\zeta - \zeta^*)\rangle \leq \uplim\limits_{k\rightarrow +\infty}\langle \lambda^{k}, Sv + t(\zeta - \zeta^*)\rangle  \quad \forall\ v\in \mathcal{U},\ \forall\ \zeta\in K\ \ \mbox{and}\ \ t\geq 0,$$
then we have
$$\langle D_u\theta(u^*), v\rangle_\mathcal{U}  + \langle \lambda_0, Sv + t(\zeta - Su^*)\rangle \leq 0\quad \forall\ v\in \mathcal{U},\ \forall\ \zeta\in K\ \ \mbox{and}\ \ t\geq 0. $$
This implies $\lambda_0$ is a proper Lagrange multiplier. 

Therefore, every weak accumulation point (if exists) of $\{\lambda^k\}_{k=1}^{+\infty}$ is a proper Lagrange multiplier, and furthermore, the weak convergence of the multipliers generated by the classical ALM in $\mathcal{X}$ implies the existence of a proper Lagrange multiplier. However, the existence of proper Lagrange multipliers can not guarantee the convergence of the multipliers generated by the classical ALM in $\mathcal{X}$, see e.g., Example \ref{Exm:ALM} in this paper. 

This will also be useful for developing new constrained qualification conditions to guarantee the existence of the proper Lagrange multiplier.
\end{Remark}

\section{Conclusions}
\label{sec:con}
In this paper, we systematically developed a new decomposition framework to investigate Lagrange multipliers of the KKT system  of constrained optimization problems and variational inequalities in Hilbert spaces. Our new framework is totally different from existing frameworks based on separation theorems. We derived the weak form asymptotic KKT system and introduced the essential Lagrange multiplier. The basic theory of the essential Lagrange multiplier has been established in this paper. The existence theory of the essential Lagrange multiplier shows essential differences in finite and infinite-dimensional cases. Based on it, we also gave necessary and sufficient conditions for the existence and uniqueness of the proper Lagrange multiplier. The results theoretically confirm the necessity of using the asymptotic or approximate KKT system in the infinite-dimensional case as well. We proved the convergence of the classical augmented Lagrangian method without using the information of Lagrange multipliers of the problem, and essentially characterized the convergence properties of the  multipliers generated by the classical augmented Lagrangian method. Some sufficient conditions to guarantee the existence of the essential Lagrange multiplier and the proper Lagrange multiplier were also derived. We also answered some fundamental mathematical questions related to constrained optimization, which further improve the mathematical foundation of constrained optimization. 

\appendix
\section{The proof of Theorem \ref{thm:w-akkt}}\label{append:A}
We first prove that if a feasible point $u^*$ satisfies the W-AKKT system (\ref{eq:w-akkt}), then $u^*$ is the global minimizer of the model problem (\ref{eq:sm_g}). According to the convex optimization theory (cf. \cite[Proposition 26.5]{bauschke2011convex}, \cite[Proposition 2.1, Chapter II]{ekeland1999convex} or \cite[Corollary 4.20]{jahn2020introduction}), it suffices to show that 
$$ \langle D_u\theta(u^*), v - u^*\rangle_\mathcal{U} \geq 0\quad\ \forall\ Sv\in K.$$

Since $u^*$ satisfies the W-AKKT system (\ref{eq:w-akkt}), there exists $\{\lambda^k\}_{k=1}^{+\infty}\subset \mathcal{X}$ such that 
$$\langle D_u\theta(u^*), v - u^*\rangle_\mathcal{U} + \lim\limits_{k\rightarrow+\infty}\langle S^*\lambda^k, v - u^*\rangle = 0\quad \forall\ v\in\mathcal{U} $$
and
$$ -\uplim\limits_{k\rightarrow+\infty}\langle \lambda^k, \zeta - Su^*\rangle \geq 0\quad \forall\ \zeta\in K.$$
 
Therefore,
$$ -\uplim\limits_{k\rightarrow+\infty}\langle \lambda^k, Sv - Su^*\rangle \geq 0\quad \forall\ Sv \in K,$$
i.e.,
$$ -\uplim\limits_{k\rightarrow+\infty}\langle S^*\lambda^k, v - u^*\rangle_{\mathcal{U}} \geq 0\quad \forall\ Sv \in K.$$
It follows
\begin{equation}\label{eq:thm_if}
\begin{aligned}
\langle D_u\theta(u^*), v - u^*\rangle_\mathcal{U} 
&= - \lim\limits_{k\rightarrow+\infty}\langle S^*\lambda^k, v - u^*\rangle_{\mathcal{U}}\\
& \geq -\uplim\limits_{k\rightarrow+\infty}\langle S^*\lambda^k, v - u^*\rangle_{\mathcal{U}}  \geq 0\quad \forall\ Sv \in K,
\end{aligned}
\end{equation}
which implies that $u^*$ is the global minimizer of the model problem (\ref{eq:sm_g}).

For the other direction, we will use the classical ALM for the model problem as an optimization procedure regularization to prove it. The algorithm will be given and the convergence analysis will be carried out without using any assumptions on Lagrange multipliers of the model problem. The convergence analysis borrows some ideas of that in  \cite{glowinski2020admm} and \cite{jiao2016alternating}  for ADMM. 

\begin{Remark}
A comprehensive discussion of the classical ALM in Banach spaces based on a different approach can be found in \cite[Chapter 4]{steck2018lagrange}, and a further application of these results to develop new constraint qualification conditions in Banach spaces can be found in \cite{borgens2020new}.
\end{Remark}

We first rewrite the model problem (\ref{eq:sm_g}) into the following equivalent form
\begin{equation}\label{eq:GOP}
\min\limits_{u\in \mathcal{U}, \zeta\in \mathcal{X}}\theta(u) + I_K(\zeta)\ \ \mbox{s.t.}\ \ Su = \zeta,
\end{equation}
where $I_K(\cdot)$ is the indicator function of $K$, which is defined by
\begin{equation}\nonumber
I_K(\zeta) = \left\{
\begin{aligned}
&0,&\quad &\mbox{if}\ \zeta\in K,\\
&+\infty,&\quad &\mbox{if}\ \zeta\not\in K.
\end{aligned}
\right.
\end{equation}

Note that the global minimizer of the problem (\ref{eq:GOP}) is given by $(u^*, \zeta^*)$, where $u^*$ is the global minimizer of the model problem (\ref{eq:sm_g}) and $\zeta^* = Su^*$.

\subsection{The classical augmented Lagrangian method}
The augmented Lagrangian functional $L_{\beta}(u,\zeta;\lambda): (\mathcal{U}\times \mathcal{X})\times \mathcal{X}\rightarrow \mathbb{R}\cup \{+\infty\}$ of the model problem (\ref{eq:sm_g}) based on (\ref{eq:GOP}) is given by
\begin{equation}\nonumber
L_{\beta}(u,\zeta;\lambda) = \theta(u) + I_K(\zeta) + \langle\lambda, Su - \zeta \rangle + \frac{\beta}{2}\|Su - \zeta\|^2,
\end{equation}
where $\beta>0$.
The classical augmented Lagrangian method for the model problem (\ref{eq:sm_g}) based on this augmented Lagrangian functional is given by
\begin{equation}\label{eq:ALM}
\left\{
\begin{aligned}
(u^{k+1}, \zeta^{k+1}) & = \arg\min\limits_{(u,\zeta)\in\mathcal{U}\times \mathcal{X}}L_{\beta}(u, \zeta;\lambda^k),\\
\lambda^{k+1} & = \lambda^k + \beta(Su^{k+1} - \zeta^{k+1}),
\end{aligned}
\right.
\end{equation}
where $\lambda^1\in \mathcal{X}$ is given.

\subsection{Convergence analysis}In this part we will give an elementary proof of the convergence of the algorithm. The proof is composed of several steps. 

We first give a characterization of the iterators $\{(u^{k}, \zeta^k, \lambda^k)\}_{k=1}^{+\infty}$ by the first order optimality system of the subproblem in the first step. 
Since the subproblem in the first step is a convex problem, solving the subproblem is equivalent to solve
\begin{equation}\nonumber
\left\{
\begin{aligned}
& D_u\theta(u^{k+1}) + S^*[\lambda^k + \beta(Su^{k+1} - \zeta^{k+1})] = 0;\\
& I_K(\zeta) - I_K(\zeta^{k+1}) - \langle\lambda^k + \beta(Su^{k+1} - \zeta^{k+1}) ,\zeta - \zeta^{k+1}\rangle\geq 0\quad \forall\ \zeta\in \mathcal{X}.
\end{aligned}
\right.
\end{equation}
Hence, the iterators of the classical ALM satisfy
\begin{equation}\label{eq:FOD_ALM}
\left\{
\begin{aligned}
& D_u\theta(u^{k+1}) + S^*\lambda^{k+1} = 0;\\
& I_K(\zeta) - I_K(\zeta^{k+1}) - \langle\lambda^{k+1} ,\zeta - \zeta^{k+1}\rangle\geq 0\quad \forall\ \zeta\in \mathcal{X};\\
& \beta r^{k+1} = \lambda^{k+1} - \lambda^k,
\end{aligned}
\right.
\end{equation}
where $r^{k+1} = Su^{k+1} - \zeta^{k+1}$. According to (\ref{eq:FOD_ALM}), it holds
\begin{equation}\label{eq:lambda_zeta}
\lambda^k\in \partial I_K(\zeta^k)\quad \forall\ k=2, 3, \dots.
\end{equation}
Without loss of generality, we also assume that (\ref{eq:lambda_zeta}) holds for $k=1$.

\subsubsection{Convergence of $\{(u^k, \zeta^k)\}_{k=1}^{+\infty}$} We first prove that $\{\|r^{k}\|\}_{k=1}^{+\infty}$ is  Fej\'{e}r monotone. 
It follows from (\ref{eq:FOD_ALM}) that
\begin{equation}\nonumber
\beta S^*r^{k+1} 
= S^*( \lambda^{k+1} - \lambda^k)
= S^* \lambda^{k+1} - S^* \lambda^k
= D_u\theta(u^{k}) - D_u\theta(u^{k+1})
\end{equation}
and
\begin{equation}\nonumber
\begin{aligned}
\beta\langle r^{k+1}, \zeta^{k+1} - \zeta^k\rangle
 & = \langle \lambda^{k+1} - \lambda^k, \zeta^{k+1} - \zeta^k \rangle\\
 & = I_K(\zeta^k) - I_K(\zeta^{k+1}) - \langle\lambda^{k+1} ,\zeta^k - \zeta^{k+1}\rangle\quad (\geq 0)\\
 &\quad + I_K(\zeta^{k+1}) - I_K(\zeta^{k}) - \langle\lambda^{k} ,\zeta^{k+1} - \zeta^{k}\rangle\quad (\geq 0)\\
 &\geq 0,
\end{aligned}
\end{equation}
which yield
\begin{equation}\nonumber
\begin{aligned}
\beta\langle r^{k+1} - r^{k}, r^{k+1}\rangle
& = \beta\langle S(u^{k+1} - u^{k}), r^{k+1} \rangle - \beta\langle \zeta^{k+1} - \zeta^{k}, r^{k+1} \rangle\\
& = \langle u^{k+1} - u^{k}, \beta S^*r^{k+1} \rangle_\mathcal{U} - \beta\langle r^{k+1}, \zeta^{k+1} - \zeta^{k} \rangle\\
& \leq -\langle u^{k+1} - u^{k}, D_u\theta(u^{k+1}) - D_u\theta(u^{k}) \rangle_\mathcal{U} \\
& \leq - c_0\|u^{k+1} - u^{k}\|^2_{\mathcal{U}},\\
\end{aligned}
\end{equation}
where we used (\ref{eq:theta_sc}) in the last inequality. Furthermore, by applying the identity
$$ \beta\langle r^{k+1} - r^{k}, r^{k+1}\rangle = \frac{\beta}{2}\left(\|r^{k+1}\|^2 -\|r^{k}\|^2 + \|r^{k+1} - r^k\|^2\right),$$
we have
$$ \frac{\beta}{2}\left(\|r^{k+1}\|^2 -\|r^{k}\|^2 + \|r^{k+1} - r^k\|^2\right) \leq - c_0\|u^{k+1} - u^{k}\|^2_{\mathcal{U}}$$
or equivalently
\begin{equation}\label{eq:Fejer_m}
\|r^{k+1}\|^2 \leq  \|r^{k}\|^2 - \|r^{k+1} - r^k\|^2 - 2c_0\beta^{-1}\|u^{k+1} - u^{k}\|^2_{\mathcal{U}}\leq \|r^{k}\|^2,
\end{equation}
which shows that $\{\|r^{k}\|\}_{k=1}^{+\infty}$ is Fej\'{e}r monotone.

Secondly, we prove that
$$\sum\limits_{k= 1}^{+\infty}\|r^{k}\|^2< +\infty.$$

We denote by $\eta = (u,\zeta)\in \mathcal{U}\times \mathcal{X}$ and recall the Bregman distance induced by the convex functional $\theta(u) + I_K(\zeta)$ at $(D_u\theta(u), \lambda)$ with $\lambda\in \partial I_K(\zeta)$, which is given by
\begin{equation}\label{eq:Bregman}
\mathcal{D}_{\eta}(\hat{\eta},\eta; \lambda): = \theta(\hat{u}) - \theta(u) - \langle D_u\theta(u), \hat{u} - u\rangle_\mathcal{U} + I_K(\hat{\zeta}) - I_K(\zeta) - \langle \lambda, \hat{\zeta} - \zeta\rangle\geq 0,
\end{equation}
for any $\hat{\eta}\in \mathcal{U}\times \mathcal{X}$.

By the assumption (\ref{eq:theta_sc}) of $\theta(u)$, we have
\begin{equation}\label{eq:Breg_Elliptic}
\mathcal{D}_{\eta}(\hat{\eta},\eta; \lambda) 
 \geq \theta(\hat{u}) - \theta(u) - \langle D_u\theta(u), \hat{u} - u\rangle_\mathcal{U}
 \geq \frac{c_0}{2}\|\hat{u} - u\|_{\mathcal{U}}^2.
\end{equation}

Let $\hat{\eta} = (\hat{u}, \hat{\zeta})$ satisfy $\hat{\zeta} = S\hat{u} \in K$. We will always assume this in the following analysis. It follows from (\ref{eq:FOD_ALM}), (\ref{eq:lambda_zeta}) and (\ref{eq:Bregman}) that 
\begin{equation}\label{eq:Breg_lu}
\begin{aligned}
&\quad\ \mathcal{D}_{\eta^{k}}(\hat{\eta},\eta^{k}; \lambda^{k}) - \mathcal{D}_{\eta^{n}}(\hat{\eta},\eta^{n}; \lambda^{n}) + \mathcal{D}_{\eta^{n}}(\eta^{k},\eta^{n}; \lambda^{n})\\
& = \langle \lambda^n - \lambda^{k}, Su^{k} - S\hat{u}\rangle+ \langle  \lambda^n - \lambda^{k}, S\hat{u} - \zeta^{k}\rangle\\
& = \langle \lambda^n - \lambda^{k}, Su^{k} - \zeta^{k}\rangle 
= -\sum\limits_{i = n}^{k-1}\langle \lambda^{i+1} - \lambda^{i}, r^{k}\rangle
= -\beta\sum\limits_{i = n}^{k-1}\langle r^{i+1}, r^k\rangle,\\
\end{aligned}
\end{equation}
for any $k>n$. 
This gives
$$ \mathcal{D}_{\eta^{k}}(\eta^{k+1},\eta^{k}; \lambda^{k}) + \beta\|r^{k+1}\|^2  = \mathcal{D}_{\eta^{k}}(\hat{\eta},\eta^{k}; \lambda^{k}) - \mathcal{D}_{\eta^{k+1}}(\hat{\eta},\eta^{k+1}; \lambda^{k+1})\quad \forall\ k = 1, 2, \dots.$$
Summing over $k$ from $n$ to $m\ (> n)$ on both sides, we have
\begin{equation}\label{eq:Breg_pre}
\sum\limits_{k=n}^{m}\left( \mathcal{D}_{\eta^{k}}(\eta^{k+1},\eta^{k}; \lambda^{k}) + \beta\|r^{k+1}\|^2\right) = \mathcal{D}_{\eta^{n}}(\hat{\eta},\eta^{n}; \lambda^{n}) - \mathcal{D}_{\eta^{m}}(\hat{\eta},\eta^{m}; \lambda^{m}).
\end{equation}
Then by taking $n=1$, using (\ref{eq:Breg_Elliptic}) and letting $m\rightarrow +\infty$, we have
\begin{equation}\nonumber
\sum\limits_{k=1}^{+\infty}\left( \mathcal{D}_{\eta^{k}}(\eta^{k+1},\eta^{k}; \lambda^{k}) + \beta\|r^{k+1}\|^2\right) \leq \mathcal{D}_{\eta^{1}}(\hat{\eta},\eta^{1}; \lambda^{1})<+\infty,
\end{equation}
which yields
\begin{equation}\label{eq:r_c}
\sum\limits_{k=1}^{+\infty}\|r^{k}\|^2<\infty\quad \mbox{and}\quad \{\mathcal{D}_{\eta^{k}}(\hat{\eta},\eta^{k}; \lambda^{k})\}_{k=1}^{+\infty}\ \ \mbox{is\ a\ Cauchy\ sequence},
\end{equation}
by (\ref{eq:Breg_pre}).
Moreover, we have
\begin{equation}\label{eq:S_c}
\|Su^k - \zeta^k\| = \|r^k\|\ \rightarrow 0\ \ \mbox{as}\ \ k\rightarrow + \infty.
\end{equation}

Now, we prove that $\{u^k\}_{k=1}^{+\infty}$ and $\{\zeta^k\}_{k=1}^{+\infty}$ are Cauchy sequences.
For any $k>n$, it follows from  (\ref{eq:Fejer_m}) that
\begin{equation}\label{eq:rm_b}
\begin{aligned}
|\beta\sum\limits_{i = n}^{k-1}\langle r^{i+1}, r^k\rangle|
\leq \frac{\beta}{2}\sum\limits_{i = n}^{k-1}(\|r^{i+1}\|^2 +  \|r^k\|^2)
\leq \beta\sum\limits_{i = n}^{k}\|r^{i}\|^2.
\end{aligned}
\end{equation}

For any $k>n$, by (\ref{eq:Breg_Elliptic}), (\ref{eq:Breg_lu}), (\ref{eq:rm_b}) and (\ref{eq:r_c}),  we have
\begin{equation}\nonumber
\begin{aligned}
\frac{c_0}{2}\|u^k - u^n\|_{\mathcal{U}}^2
& \leq \mathcal{D}_{\eta^{n}}(\eta^k,\eta^{n}; \lambda^{n})\\
& = -\beta\sum\limits_{i = n}^{k-1}\langle r^{i+1}, r^k\rangle - [\mathcal{D}_{\eta^{k}}(\hat{\eta},\eta^{k}; \lambda^{k}) - \mathcal{D}_{\eta^{n}}(\hat{\eta},\eta^{n}; \lambda^{n})]\\
& \leq  \beta\sum\limits_{i = n}^{k}\|r^{i}\|^2 + |\mathcal{D}_{\eta^{k}}(\hat{\eta},\eta^{k}; \lambda^{k}) - \mathcal{D}_{\eta^{n}}(\hat{\eta},\eta^{n}; \lambda^{n})|\ \rightarrow\ 0\ \mbox{as}\ n, k\rightarrow +\infty.\\
\end{aligned}
\end{equation}

Therefore, $\{u^k\}_{k=1}^{+\infty}$ is a Cauchy sequence and $\{\zeta^k\}_{k=1}^{+\infty}$ is a Cauchy sequence by (\ref{eq:S_c}). 

Let $\bar{u}\in \mathcal{U}$ and $\bar{\zeta}\in\mathcal{X}$ be the limits of $\{u^k\}_{k=1}^{+\infty}$ and $\{\zeta^k\}_{k=1}^{+\infty}$ respectively, i.e., 
\begin{equation}\label{eq:uzeta_c}
\lim\limits_{k\rightarrow + \infty}u^k = \bar{u}\quad \mbox{and}\quad \lim\limits_{k\rightarrow + \infty}\zeta^k = \bar{\zeta}.
\end{equation}

Since $\{ \zeta^k\}_{k=1}^{+\infty}\subset K$ and $K$ is closed, we have $\bar{\zeta}\in K$.  It follows from (\ref{eq:S_c}) that
\begin{equation}\label{eq:C_eq}
S\bar{u} = \bar{\zeta}.
\end{equation}

\subsubsection{Global convergence of the algorithm} Now we prove that $(\bar{u}, \bar{\zeta})$ is the global minimizer of the problem (\ref{eq:GOP}). Since 
$$ \theta(u) + I_K(\zeta)$$
is lower semicontinuous, we have
\begin{equation}\label{eq:O_l}
\theta(\bar{u}) + I_K(\bar{\zeta})\leq \lowlim\limits_{k\rightarrow +\infty}[\theta(u^k) + I_K(\zeta^k)].
\end{equation}

Note that  $\lambda^k\in \partial I_K(\zeta^k)$. The convexity of the objective functional gives
$$ \theta(\bar{u}) + I_K(\bar{\zeta}) - [\theta(u^k) + I_K(\zeta^k)] - [ \langle \lambda^k, \bar{\zeta} - \zeta^k\rangle + \langle D_u\theta(u^k), \bar{u} - u^k \rangle_\mathcal{U}]\geq 0, $$
i.e.,
\begin{equation}\label{eq:upk}
\theta(u^k) + I_K(\zeta^k) \leq \theta(\bar{u}) + I_K(\bar{\zeta}) - [ \langle \lambda^k, \bar{\zeta} - \zeta^k\rangle + \langle D_u\theta(u^k), \bar{u} - u^k \rangle_\mathcal{U}].
\end{equation}
For any fixed $n$  and $k>n$, it follows from (\ref{eq:C_eq}) and (\ref{eq:Breg_lu}) with $\hat{\eta} = (\bar{u}, \bar{\zeta})$ that
\begin{equation}\label{eq:crucial_eq}
\begin{aligned}
&\quad\ |\langle \lambda^k, \bar{\zeta} - \zeta^k\rangle + \langle D_u\theta(u^k), \bar{u} - u^k \rangle_\mathcal{U}|\\
& =  |\langle \lambda^k - \lambda^n, \bar{\zeta} - \zeta^k\rangle + \langle D_u\theta(u^k) - D_u\theta(u^n), \bar{u} - u^k \rangle_\mathcal{U} \\
&\quad + \langle \lambda^n, \bar{\zeta} - \zeta^k\rangle + \langle D_u\theta(u^n), \bar{u} - u^k \rangle_\mathcal{U}|\\
& = | \beta\sum\limits_{i = n}^{k-1}\langle r^{i+1}, r^k\rangle + [\langle \lambda^n, \bar{\zeta} - \zeta^k\rangle + \langle D_u\theta(u^n), \bar{u} - u^k \rangle_\mathcal{U}]|\\
& \leq \beta\sum\limits_{i = n}^{k}\|r^{i}\|^2 + |\langle \lambda^n, \bar{\zeta} - \zeta^k\rangle + \langle D_u\theta(u^n), \bar{u} - u^k \rangle_\mathcal{U}|.
\end{aligned}
\end{equation}
Hence, by (\ref{eq:upk}), (\ref{eq:crucial_eq}), (\ref{eq:r_c}) and (\ref{eq:uzeta_c}), we get
\begin{equation}\nonumber
\begin{aligned}
&\quad \uplim\limits_{k\rightarrow +\infty}[\theta(u^k) + I_K(\zeta^k)]\\ 
& \leq \theta(\bar{u}) + I_K(\bar{\zeta}) + \uplim\limits_{k\rightarrow +\infty}\left\{\beta\sum\limits_{i = n}^{k}\|r^{i}\|^2 + |\langle \lambda^n, \bar{\zeta} - \zeta^k\rangle + \langle D_u\theta(u^n), \bar{u} - u^k \rangle_\mathcal{U}| \right\}\\
& =  \theta(\bar{u}) + I_K(\bar{\zeta}) + \beta\sum\limits_{i = n}^{+\infty}\|r^{i}\|^2.
\end{aligned}
\end{equation}
If we further take $n\rightarrow + \infty$ and use (\ref{eq:r_c}), we obtain
\begin{equation}\label{eq:O_u}
\lim\limits_{k\rightarrow +\infty}\sup[\theta(u^k) + I_K(\zeta^k)] \leq \theta(\bar{u}) + I_K(\bar{\zeta}).
\end{equation}
This together with (\ref{eq:O_l}) yields
\begin{equation}\label{eq:O_c}
\theta(\bar{u}) + I_K(\bar{\zeta}) = \lim\limits_{k\rightarrow +\infty}[\theta(u^k) + I_K(\zeta^k)].
\end{equation}

For any $(\hat{u},\hat{\zeta})$ satisfing $S\hat{u} = \hat{\zeta}$, we will prove that
$$ \theta(\bar{u}) + I_K(\bar{\zeta})\leq  \theta(\hat{u}) + I_K(\hat{\zeta}).$$

The proof is similar to that of (\ref{eq:O_u}). Since
$$ \theta(u^k) + I_K(\zeta^k) \leq \theta(\hat{u}) + I_K(\hat{\zeta}) - [ \langle \lambda^k, \hat{\zeta} - \zeta^k\rangle + \langle D_u\theta(u^k), \hat{u} - u^k \rangle_\mathcal{U}]$$
and
\begin{equation}\nonumber
\begin{aligned}
 |\langle \lambda^k, \hat{\zeta} - \zeta^k\rangle + \langle D_u\theta(u^k), \hat{u} - u^k \rangle_\mathcal{U}|
 \leq \beta\sum\limits_{i = n}^{k}\|r^{i}\|^2 + |\langle \lambda^n, r^k\rangle|
\end{aligned}
\end{equation}
where $n$ is fixed and $k>n$, we have
\begin{equation}\nonumber
\begin{aligned}
\theta(u^k) + I_K(\zeta^k) 
& \leq \theta(\hat{u}) + I_K(\hat{\zeta}) - [ \langle \lambda^k, \hat{\zeta} - \zeta^k\rangle + \langle D_u\theta(u^k), \hat{u} - u^k \rangle_\mathcal{U}] \\
& \leq \theta(\hat{u}) + I_K(\hat{\zeta}) + \beta\sum\limits_{i = n}^{k}\|r^{i}\|^2 + |\langle \lambda^n, r^k\rangle|.
\end{aligned}
\end{equation}
By (\ref{eq:r_c}) and (\ref{eq:O_c}), we get
\begin{equation}\nonumber
\begin{aligned}
 \theta(\bar{u}) + I_K(\bar{\zeta}) = \lim\limits_{k\rightarrow+\infty}\theta(u^k) + I_K(\zeta^k) \leq \theta(\hat{u}) + I_K(\hat{\zeta}) + \beta\sum\limits_{i = n}^{+\infty}\|r^{i}\|^2.
\end{aligned}
\end{equation}
By taking $n\rightarrow + \infty$ and using (\ref{eq:r_c}) again, it leads to
\begin{equation}\nonumber
\theta(\bar{u}) + I_K(\bar{\zeta}) \leq \theta(\hat{u}) + I_K(\hat{\zeta}),
\end{equation}
which implies $(\bar{u},\bar{\zeta})$ is the global minimizer of the model problem.

\subsection{Deriving the weak form asymptotic KKT system} Now we derive the weak form asymptotic KKT system (\ref{eq:w-akkt}). This can be achieved by exploring the limiting case of (\ref{eq:FOD_ALM}).

We have proved that $\{u^k\}_{k=1}^{+\infty}$ strongly converges to the global minimizer $u^*$ of the model problem (\ref{eq:sm_g}). Since $\theta(u)$ is continuously differentiable, it gives
$$ \lim\limits_{k\rightarrow +\infty} D_u\theta(u^k) = D_u\theta(u^*)$$
and
\begin{equation}\label{eq:w-akkt1}
\langle D_u\theta(u^*), v\rangle_\mathcal{U}  + \lim\limits_{k\rightarrow +\infty}\langle \lambda^{k}, Sv\rangle = 0\quad \forall\ v\in \mathcal{U},
\end{equation}
by (\ref{eq:FOD_ALM}).

In order to get (\ref{eq:w-akkt}), it suffices to show that
$$-\uplim\limits_{k\rightarrow + \infty}\langle \lambda^k, \zeta - Su^*\rangle \geq 0\quad \forall\ \zeta\in K.$$

Denote by $\zeta^* = Su^* \in K$.  According to (\ref{eq:FOD_ALM}), for any $\zeta\in K$, we have
\begin{equation}\nonumber
-\langle\lambda^{k+1} ,\zeta - \zeta^{k+1} \rangle \geq 0
\end{equation}
and
\begin{equation}\label{eq:w-pre1}
\begin{aligned}
- \langle\lambda^{k+1} ,\zeta - \zeta^*\rangle
& \geq \langle \lambda^{k+1}, \zeta^* - \zeta^{k+1}\rangle +  \langle D_u\theta(u^{k+1}), u^* - u^{k+1} \rangle_\mathcal{U} \\
&\quad\quad \quad\quad\quad \quad\quad\quad \ \ \  -  \langle D_u\theta(u^{k+1}), u^* - u^{k+1} \rangle_\mathcal{U}.
\end{aligned}
\end{equation}

For any fixed $n$  and $k + 1>n$, applying (\ref{eq:crucial_eq}) to $(u^*, \zeta^*)$, we have 
\begin{equation}\nonumber
\begin{aligned}
&\quad |\langle \lambda^{k+1}, \zeta^* - \zeta^{k+1}\rangle + \langle D_u\theta(u^{k+1}), u^* - u^{k+1} \rangle_\mathcal{U}| \\
& \leq \beta\sum\limits_{i = n}^{k+1}\|r^{i}\|^2 + |\langle \lambda^n, \zeta^* - \zeta^{k+1}\rangle + \langle D_u\theta(u^n), u^* - u^{k+1} \rangle_\mathcal{U}|.
\end{aligned}
\end{equation}
Following the same arguments as before (see (\ref{eq:crucial_eq})-(\ref{eq:O_u})), we get
\begin{equation}\label{eq:w-pre2}
|\langle \lambda^{k+1}, \zeta^* - \zeta^{k+1}\rangle + \langle D_u\theta(u^{k+1}), u^* - u^{k+1} \rangle_\mathcal{U}| \rightarrow 0\quad \mbox{as}\quad k\rightarrow +\infty.
\end{equation}

On the other hand, the strong convergence of $\{u^{k}\}_{k=1}^{+\infty}$ and the boundedness of $D_u\theta(u)$ around $u^*$ give
\begin{equation}\label{eq:w-pre3}
\lim\limits_{k\rightarrow +\infty}\langle D_u\theta(u^{k+1}), u^* - u^{k+1} \rangle_\mathcal{U} = 0.
\end{equation}

Together with (\ref{eq:w-pre1}), (\ref{eq:w-pre2}) and (\ref{eq:w-pre3}), we arrive at 
\begin{equation}\label{eq:w-akkt2}
-\uplim\limits_{k\rightarrow + \infty}\langle \lambda^k, \zeta - \zeta^*\rangle = \lowlim\limits_{k\rightarrow + \infty}[-\langle \lambda^k, \zeta - \zeta^*\rangle]  \geq 0\quad \forall\ \zeta\in K.
\end{equation}

Therefore, by (\ref{eq:w-akkt1}), (\ref{eq:w-akkt2}) and $\zeta^* = Su^*$, we have the weak form asymptotic KKT system (\ref{eq:w-akkt}) at $u^*$.

This completes the proof of Theorem \ref{thm:w-akkt}.


\bibliographystyle{amsplain}
\bibliography{KKT_ref}

 \medskip

\end{document}